\newcommand{\R }{\mathbb{R}}
\newcommand{\f}{\frac}
\newcommand{\LC}{\left(}
\newcommand{\RC}{\right)}
\newcommand{\Int}{\int_{\mathbb{R}}}
\newtheorem{remark}{Remark}[section]
\newtheorem{cor}{Corollary}[section]
\newtheorem{theorem}{Theorem}[section]
\begin{document}


\title[Shallow-water model with the Coriolis effect]
{A nonlocal shallow-water model arising from the full water waves with the Coriolis effect}
\author[Gui]{Guilong Gui}
\address{Guilong Gui\newline
School of Mathematics, Northwest University, Xi'an 710069, P. R. China}
\email{glgui@amss.ac.cn}
\author[Liu]
{Yue Liu}
\address{Yue Liu \newline
Department of Mathematics, University of Texas at Arlington, Arlington, TX 76019}
\email{yliu@uta.edu}
\author[Sun]
{Junwei Sun}
\address{Junwei Sun\newline
Department of Mathematics, University of Texas at Arlington, Arlington, TX 76019}
\email{junwei.sun@mavs.uta.edu}


\begin{abstract}
In the present study
a mathematical model of the equatorial water waves propagating mainly in one direction with the effect of Earth's rotation is derived by the formal asymptotic procedures in the equatorial zone. Such  a model equation is analogous to the Camassa-Holm approximation of the two-dimensional incompressible and  irrotational  Euler equations and has a formal bi-Hamiltonian structure.  Its solution corresponding to physically relevant initial perturbations is more accurate on a much longer time scale. It is shown that the deviation of the free surface can be determined by the horizontal velocity at a certain depth in the second-order approximation.  The  effects of the Coriolis force caused by the Earth rotation and  nonlocal higher nonlinearities on blow-up criteria and wave-breaking phenomena are also investigated. Our refined analysis is approached by applying the method of characteristics and conserved quantities to the  Riccati-type differential inequality.
\end{abstract}


\maketitle

\noindent {\sl Keywords\/}:  Coriolis effect; rotation-Camassa-Holm equation;  shallow water;  wave breaking.

\vskip 0.2cm

\noindent {\sl AMS Subject Classification} (2010): 35Q53; 35B30; 35G25 \\

\renewcommand{\theequation}{\thesection.\arabic{equation}}
\setcounter{equation}{0}

\section{Introduction}

It is known  that many of the shallow water  models as approximations to the full Euler dynamics are only valid in the weakly nonlinear regime, for instance, the classical Korteweg-de Vries (KdV) equation \cite{KdV}
\[
u_t + u_x + \frac{3}{2}u u_x + \frac{1}{6} u_{xxx} = 0.
\]
However,  the more interesting physical phenomena, such as wave breaking, waves of maxima height \cite {AmTo, To}, require a transition to full {\it nonlinearity}. The KdV equation is a simple mathematical model for gravity waves in shallow water, but it fails to model fundamental physical phenomena such as the extreme wave of Stokes \cite{St} and does not include breaking waves (i.e.  wave profile  remains bounded while its slope becomes unbounded in finite time). The failure of weakly nonlinear shallow-water wave equations to model observed wave phenomena in nature is prime motivation in the search for alternative models for nonlinear shallow-water waves \cite {Ro, Wh}. The long-wave regime is usually characterized by presumptions of long wavelength $\lambda$  and small amplitude $a$ with the amplitude parameter $\varepsilon $ and the shallowness parameter $\mu $ respectively by
\begin{equation*} \label{parameter}
\varepsilon = \frac{a}{h_0} \ll 1, \qquad \mu = \frac{h_0^2}{\lambda^2} \ll 1.
\end{equation*}
It is well understood that the KdV model provides a good asymptotic approximations of unidirectional solutions of the irrotational two-dimensional water waves  problem  on the Boussinesq regime $ \mu \ll 1$, $\varepsilon = O(\mu) $ \cite{BCL05, C85}. To describe more accurately the motion of these unidirectional waves, it was shown in \cite{CL09} that the Camassa-Holm (CH) equation \cite{CH, FF} in the CH scaling, $  \mu \ll 1$, $\varepsilon = O(\sqrt{\mu}), $ could be valid higher order  approximations to the governing equation for full water waves in the long time scaling $ O(\frac{1}{\varepsilon})$. Like  the KdV, the CH equation is integrable and have solitons, while the  CH equation models breaking waves and has  peaked solitary waves \cite {CH, ce-1, Mc1}.  It is also found  that  the Euler equation has breaking waves \cite{BeOl} and a traveling-wave solution with the greatest height which has  a corner at its crest \cite{To}.

The Camassa-Holm equation inspired the search for various generalization of this equation with interesting properties and applications. Note  that all nonlinear terms in the CH equation is quadratic. It  is then of great interest to find those integrable equations with  higher-power nonlinear terms.

Analogous to the CH equation, our first  main aim of  the present paper is to formally derive a model equation with the Coriolis effect  from the incompressible and  irrotational two-dimensional shallow water in the equatorial region. This new model equation called the rotation-Camassa-Holm (R-CH) equation has a cubic and even quartic nonlinearities and  a formal Hamiltonian structure. More precisely, the motion of the fluid is described by the scalar equation in the form
\begin{equation}\label{R-CH-1}
\begin{split}
 u_t-\beta\mu  u_{xxt} + c u_x + 3\alpha\varepsilon uu_x - \beta_0\mu u_{xxx} &+ \omega_1 \varepsilon^2u^2u_x + \omega_2 \varepsilon^3u^3u_x   \\
&= \alpha\beta\varepsilon\mu( 2u_{x}u_{xx}+uu_{xxx}),
\end{split}
\end{equation}
 where the parameter $ \Omega $ is the constant rotational frequency due to the Coriolis effect. The other constants appearing in the equation  are defined by
$
c = \sqrt{1 + \Omega^2} - \Omega,  \;  \alpha \overset{\text{def}}{=} \frac{c^2}{1+c^2},  \, \beta_0  \overset{\text{def}}{=}\frac{c(c^4+6c^2-1)}{6(c^2+1)^2},  \, \beta \overset{\text{def}}{=}\frac{3c^4+8c^2-1}{6(c^2+1)^2},
$
$\omega_1 \overset{\text{def}}{=}\frac{-3c(c^2-1)(c^2-2)}{2(1+c^2)^3}, \, {\rm and}\, \omega_2 \overset{\text{def}}{=}\frac{(c^2-2)(c^2-1)^2(8c^2-1)}{2(1+c^2)^5}
$
satisfying  $c\to 1$, $\beta\to\f{5}{12}$, $\beta_0\to\f{1}{4}$, $\omega_1, \, \omega_2 \to 0$ and $\alpha\to\f{1}{2}$  when $\Omega\to 0$.
Denote $p_{\mu}(x)\overset{\text{def}}{=}\frac{1}{2\sqrt{\beta\mu}}e^{-\frac{|x|}{\sqrt{\beta\mu}}}$, $x\in \mathbb{R}$, then $(1-\beta\mu\partial_x^2)^{-1}f=p_\mu \ast f$ for all $f \in L^2(\mathbb{R})$ and $p_\mu \ast (u-\beta\mu u_{xx})=u$, where $\ast$ denotes
convolution with respect to the spatial variable $x$. With this notation, equation \eqref{R-CH-1} can also be equivalently rewritten as the following nonlocal form:
\begin{equation*}\label{nonlocal-form-1}
u_t +\frac{\beta_0}{\beta}u_x+\alpha\varepsilon u u_x+ p_\mu \ast \partial_x\bigg((c-\frac{\beta_0}{\beta}) u + \alpha \varepsilon u^2 +\frac{1}{2}\alpha\beta\varepsilon\mu u_x^2+ \frac{\omega_1}{3}\varepsilon^2 u^3+ \frac{\omega_2}{4}\varepsilon^3 u^4 \bigg) = 0,
\end{equation*}
or what is the same,
\begin{equation*}\label{nonlocal-form-2}
\begin{cases}
&u_t + \frac{\beta_0}{\beta}u_x+\alpha\varepsilon u u_x+ \partial_x P= 0,\\
&(1-\beta\mu\partial_x^2)P=(c-\frac{\beta_0}{\beta}) u + \alpha \varepsilon u^2 +\frac{1}{2}\alpha\beta\varepsilon\mu u_x^2+ \frac{\omega_1}{3}\varepsilon^2 u^3+ \frac{\omega_2}{4}\varepsilon^3 u^4.
\end{cases}
\end{equation*}
The solution $ u $ of \eqref{R-CH-1} represents the horizontal velocity field at height $ z_0$, and after the re-scaling, it is required that $ 0 \leq z_0 \leq 1, $ where
\begin{equation}\label{z-0-value}
z_0^2 = \frac {1}{2} - \frac{2}{3} \frac{1}{(c^2 + 1)} + \frac{4}{3} \frac{1}{(c^2 + 1)^2}.
\end{equation}
Since it is also natural to require that the constant $ \beta > 0, $ it must be the case
\[
0 \leq \Omega < \sqrt {\frac{1}{6} (1 + 2 \sqrt{19})} \approx 1.273,
\] and
\[
\frac{1}{\sqrt{2}} \leq z_0 < \sqrt{ \frac{61 - 2 \sqrt{19}} {54}} \approx 0.984.
\]
In particular,  when $ \Omega = 0, $  $ z_0= \frac{1}{\sqrt{2}} $ is corresponding  to the case of  classical CH equation.

The starting point of our derivation of the R-CH model in \eqref{R-CH-1} is the paper \cite{Jo1} where the classical CH equation  was derived.
The R-CH equation in \eqref{R-CH-1} is established by showing that  after a double asymptotic expansion with respect to $\varepsilon$ and $\mu$,  the free surface $\eta=\eta(\tau, \xi)$ under the field variable $ (\eta, \xi) $ defined in \eqref{notation-1}  in 2D Euler's dynamics \eqref{Euler-1} (see Section 2), is governed by the equation
\begin{equation*}\label{eta-eqns-1}
\begin{split}
   2(\Omega+c)\eta_{\tau} + 3c^2\eta\eta_{\xi} + \frac{c^2}{3}\mu\eta_{\xi\xi\xi} + A_1\varepsilon\eta^2\eta_{\xi} + A_2\varepsilon^2\eta^3\eta_{\xi} +A_{0}\varepsilon^3\eta^4\eta_{\xi}\\
  = \varepsilon\mu\Big[A_3\eta_{\xi}\eta_{\xi\xi} + A_4\eta\eta_{\xi\xi\xi}\Big]+O(\varepsilon^4, \mu^2),
\end{split}
\end{equation*}
where the constants $A_1 \overset{\text{def}}{=} \frac{3c^2(c^2-2)}{(c^2+1)^2}$, $ A_2 \overset{\text{def}}{=}  -\frac{c^2(2-c^2)(c^6-7c^4+5c^2-5)}{(c^2+1)^4}$, $A_3 \overset{\text{def}}{=}  \frac{-c^2(9c^4+16c^2-2)}{3(c^2+1)^2}$, $A_4 \overset{\text{def}}{=} \frac{-c^2(3c^4+8c^2-1)}{3(c^2+1)^2}$, $A_{0} \overset{\text{def}}{=} \frac{c^2(c^2-2)(3c^{10}+228c^8-540c^6-180c^4-13c^2+42)}{12(c^2+1)^6}$.
The free surface  $\eta $ with respect to the horizontal component of the velocity $u$ at $ z = z_0 $ under the CH regime $\varepsilon=O(\sqrt{\mu})$ is also given by
\begin{equation*}\label{surface-1}
\eta = \frac{1}{c} u + \gamma_1\varepsilon u^2 +\gamma_2\varepsilon^2 u^3+\gamma_3\varepsilon^3 u^4+\gamma_4 \varepsilon\mu u_{\xi\xi}+O(\varepsilon^4,\mu^2),
\end{equation*}
where the constants in the expression are given by $ \gamma_1=\frac{2-c^2}{2c^2(c^2+1)}$, $\gamma_2=\frac{(c^2-1)(c^2-2)(2c^2+1)}{2c^3(c^2+1)^3}$,
$\gamma_3=-\frac{(c^2-1)^2(c^2-2)(21c^4+16c^2+4)}{8c^4(c^2+1)^5}$, and $\gamma_4=\frac{z_0^2}{2c}-\frac{3c^2+1}{6c(c^2+1)}=\frac{-(3c^4+6c^2-5)}{12c(c^2+1)^2}$ (here the height parameter $z_0$ is determined by \eqref{z-0-value}).

Denote $m\overset{\text{def}}{=}(1-\beta\mu\partial_x^2)u$, one can  rewrite the above equation in terms of the evolution of the  momentum density $m$, namely,
\begin{equation}\label{R-CH-m}
\partial_t m +\alpha\varepsilon(um_x+2mu_x) +cu_{x} - \beta_0\mu u_{xxx} + \omega_1 \varepsilon^2u^2u_x + \omega_2 \varepsilon^3u^3u_x = 0.
\end{equation}
In the case that the Coriolis effect vanishes ($ \Omega = 0$), the coefficients in the higher-power nonlinearities $ \omega_1 = 0 $ and $ \omega_2 = 0.$ Using the scaling transformation
$u(t, x) \to \alpha \varepsilon u(\sqrt{\beta \mu}\,\,t,\sqrt{\beta \mu}\,\,x)$ and then the Galilean transformation $ u(t, x) \to u(t, x- \frac{3}{4}t) + \frac{1}{4}, $ the  R-CH equation \eqref{R-CH-m} is then reduced to the classical CH equation
\begin{equation*}\label{CH-1}
u_t - u_{xxt} + 3 uu_x = 2 u_x u_{xx} + u u_{xxx}.
\end{equation*}
On the other hand, if we take formally $\beta=0$ and $\omega_2=0$ in \eqref{R-CH-m}, then we get  the following integrable Gardner equation \cite{Gard68}
\begin{equation*}
u_t + c u_x + 3\alpha\varepsilon uu_x - \beta_0\mu u_{xxx} + \omega_1 \varepsilon^2u^2u_x  = 0.
\end{equation*}
Note that the R-CH equation \eqref{R-CH-m}  has the following three conserved quantities
\[
I(u) =\Int u\, dx, \quad  E(u)=\frac{1}{2}\Int u^2+\beta\mu u_x^2\,dx,
\] and
\[
 F(u)=\frac{1}{2}\Int cu^2+\alpha\varepsilon u^3+\beta_0\mu u_x^2+\frac{\omega_1 \varepsilon^2}{6}u^4 + \frac{\omega_2 \varepsilon^3}{10}u^5 +\alpha\beta\varepsilon\mu uu^2_x\,dx.
\]
Define  that
\begin{equation*}
\begin{split}
B_1 &\overset{\text{def}}{=} \partial_x(1-\beta\mu\partial^2_x),   \qquad {\rm and} \\
B_2 &\overset{\text{def}}{=} \partial_x((\alpha\varepsilon m+\frac{c}{2})\cdot)+(\alpha\varepsilon m+\frac{c}{2})\partial_x-\beta_0\mu\partial_x^3+\frac{2}{3}\omega_1\varepsilon^2\partial_x(u\partial_x^{-1}(u\partial_x\cdot)) \\
&\qquad\qquad\qquad \qquad\qquad\qquad\qquad \qquad+ \frac{5}{8}\omega_2\varepsilon^3\partial_x(u^{\frac{3}{2}} \partial_x^{-1}(u^{\frac{3}{2}}\partial_x\cdot)).
\end{split}
\end{equation*}
A simple calculation then reveals that the  R-CH equation \eqref{R-CH-1} can be written as
\begin{equation*}
m_t=-B_1\frac{\delta F}{\delta m} = -B_2\frac{\delta E}{\delta m},
\end{equation*}
where $B_1$ and $B_2$ are two skew-symmetric differential  operators.

The class of evolution equations \eqref{R-CH-1} are all formally models for  small amplitude, long waves on the surface  of water over a flat bottom.   It is our expectation that these equations approximate solutions of the full water-wave problem with the Coriolis effect for an ideal fluid with an error that is of order $ O(\mu^2t) $ over a CH time scale at least of order  $ O(\varepsilon^{-1}). $ Rigorous justification to this effect is available in \cite {CGL16} (see also \cite{CL09} for the case  without the Coriolis effect).

It is also found  that the consideration of the Coriolis effect gives rise to  a higher power nonlinear term into the R-CH model, which has interesting implications for the fluid motion, particular in the relation to the wave breaking phenomena and the permanent waves. On the other hand, it is also our goal in the present paper to investigate  from this model how the Coriolis forcing due to the Earth rotation with the higher power nonlinearities affects the wave breaking phenomena and what conditions can ensure  the occurrence of  the wave-breaking phenomena or permanent  waves.

The dynamics of the blow-up quantity along the characteristics in the R-CH equation actually involves the interaction among three parts: a local nonlinearity, a nonlocal term, and a term stemming from the weak Coriolis forcing. It is observed that  the nonlocal (smoothing) effect can help maintain the regularity while waves propagate and hence prevent them from blowing up, even when dispersion is weak or absent. See, for example, the Benjamin-Bona-Mahoney (BBM) equation \cite{BBM}.  As the local  nonlinearity becomes stronger and dominates over the dispersion and nonlocal effects singularities may occur in the sense of {\it wave-breaking}. Examples can be found in the Whitham equation \cite {ce-1, Wh}, Camassa-Holm (CH) equation \cite{CH,CL09, FF}. It is also found that the Coriolis effect will spread out waves and make them decay in time, delaying the onset of wave-breaking. Understanding the wave-breaking mechanism such as when a singularity can form and what the nature of it is not only presents fundamental importance from mathematical point of view but also is of great physical interest, since it would help provide a key-mechanism for localizing energy in conservative systems by forming one or several small-scale spots. For instance, in fluid dynamics, the possible phenomenon of finite time breakdown for the incompressible Euler equations signifies the onset of turbulence in high Reynolds number flows.

The R-CH equation with a nonlocal structure can be reformulated in a weak form of nonlinear nonlocal transport type. From the transport theory, the blow-up criteria assert that singularities are caused by the focusing of characteristics, which involve the information on the gradient $u_x$. The dynamics of the wave-breaking quantity along the characteristics  is established by the Riccati-type differential inequality.   The argument  is then approached by a refined analysis on  evolution of the solution $ u $ and its gradient $ u_x $.  Recently Brandolese and Cortez \cite{BrCo1} introduced a new type of blow-up criteria in the study of the classical CH equation. It is shown how local structure of the solution affects the blow-ups. Their argument relies heavily on the fact that the convolution terms are quadratic and positively definite. As for the R-CH equation, the convolution contains cubic even quartic nonlinearities which do not have a lower bound in terms of the local terms. Hence the higher-power nonlinearities in the equation makes it difficult to obtain a purely local condition on  the initial data can generate finite-time wave-breaking. In our case, the blow-up can be deduced  by the interplay between $u$ and $u_x$. More precisely, this motivates us to carry out a refined analysis of the characteristic dynamics of $M = u - u_x + c_1 $ and $N = u + u_x + c_2$.
The estimates of $M$ and $N$ can be closed in the form of
\begin{equation*}\label{estimates MN}
M'(t)\geq - cMN + \mathcal{N}_1, \quad N'(t) \leq c MN + \mathcal{N}_2,
\end{equation*}
where the  nonlocal terms $\mathcal{N}_i \;(i=1,2)$ can be bounded in terms of certain  order conservation laws. From these Riccati-type differential inequalities the monotonicity of $M$ and $N$ can be established, and hence the finite-time wave-breaking follows.

The present contribution proceeds in the following. In the next section, the R-CH model equation is  formally derived from the incompressible and irrotational full water wave equations with the Coriolis effect considered, which is  an asymptotic model in the CH regime to the $f$-plane geophysical governing equations in the equatorial region.  Sections \ref{local} is  devoted to the local well-posedness and blow-up criteria. In the last section, Section \ref{breaking}, the wave-breaking criteria are  established in Theorem \ref{thm-wavebreak-crt} and  the breakdown mechanisms are set up in Theorem \ref{Blow-up}.

\smallskip

\noindent{\bf Notation.} In the sequel, we denote by $\ast$ the convolution. For
$1\leq p<\infty$, the norms in the Lebesgue space $L^p(\R)$ is
$\|f\|_{p}=\Big(\int_{\R}|f(x)|^pdx\Big)^{\frac1p}$, the space
$L^{\infty}(\R)$ consists of all essentially bounded, Lebesgue
measurable functions $f$ equipped with the norm $\displaystyle
\|f\|_{\infty}=\inf_{\mu(e)=0}\sup_{x\in \R\setminus e}|f(x)|$.
 For  a function $f$ in the classical Sobolev
spaces $H^s(\R)\;(s\geq0)$ the norm is denoted by $ \|f\|_{H^s} $. We denote $p(x) = {1\over2} e^{-|x|}$ the fundamental solution of $ 1
- \partial^2_x $ on $\R$, and define the two convolution operators
$p_+, \;p_-$ as
\begin{equation*}\label{convo}
\begin{split}
& p_+ \ast f (x) = {e^{-x} \over 2} \int^x_{-\infty} e^y f(y) dy\\
& p_- \ast f(x) = {e^{x}\over 2} \int^\infty_{x} e^{-y} f(y) dy.
\end{split}
\end{equation*}
Then we have the relations $ \displaystyle
p = p_+ + p_-, \quad p_x = p_- - p_+. $

\renewcommand{\theequation}{\thesection.\arabic{equation}}
\setcounter{equation}{0}

\section{Derivation of the R-CH model} \label{derivation}
The formal derivation of the  Camassa-Holm model equation with the Coriolis effect in the equatorial region is the topic of the present section. Attention is given here is the so-called long-wave limit. in this setting, it is assumed that water flows are incompressible and inviscid with a constant density $\rho$ and no surface tension,  and the interface between the air and the water is a free surface. Then such a motion of water flow occupying a domain $\Omega_t$ in $\mathbb{R}^3$ under the influence of the gravity $ g $   and the Coriolis force due to the Earth's rotation
 can be described by the  Euler equations \cite{GSR07}, {\it viz.}
\begin{equation*} \label{R-Euler}
\begin{cases}
 &\vec{u}_t+\LC \vec{u}\cdot\nabla \RC\vec{u} + 2 \vec {\Omega} \times \vec{u} =-{1\over\rho}\nabla P +\vec{g},\quad x\in \Omega_t,\\
 &\nabla\cdot \vec{u}=0, \quad x\in \Omega_t,\\
 &\vec{u}|_{t=0}=\vec{u_0}, \quad x\in \Omega_0,
 \end{cases}
\end{equation*}
where $ \vec u = (u, v, w )^T $ is  the fluid velocity, $ P(t,x,y,z)$ is the pressure in the fluid, $ \vec{g} = ( 0, 0, -g )^T$ with $g \approx 9.8 m/s^2$ the constant gravitational acceleration at the Earth's surface, and $\vec \Omega  =  ( 0, \,  \Omega_0 \cos \phi, \,  \Omega_0 \sin \phi)^T$, with the rotational frequency  $\Omega_0 \approx 73\cdot 10^{-6}$rad/s and the local latitude $ \phi$,  is the angular velocity vector which is directed along the axis of rotation of the rotating reference frame. We adopt a rotating framework with the origin located at a point on the Earth's surface, with the $x$-axis chosen horizontally due east, the $y$-axis horizontally due north and the $z$-axis upward. We consider here waves at the surface of water with a flat bed, and assume that $\Omega_t=\{(x, y, z): 0<z<h_0+\eta(t, x, y)\}$, where $h_0$ is the typical depth of the water and $\eta(t, x, y)$ measures the deviation from the average level. Under the $f$-plane approximation $( \sin \phi \approx 0, \;  \phi \ll 1)$, the motion of inviscid irrotational fluid near the Equator in the region $0 < z < h_0 + \eta(t,x,y)$ with a constant density $\rho$ is described by the Euler equations \cite{Con12,GSR07} in the form
\begin{equation*} \label{f-plane}
\begin{cases}
u_t + uu_x+ vu_y + wu_z + 2\Omega_0 w = -\frac{1}{\rho}P_x, \\
v_t + uv_x+ vv_y + wv_z = -\frac{1}{\rho}P_y, \\
w_t + uw_x + vw_y + ww_z - 2\Omega_0 u = -\frac{1}{\rho}P_z - g,
\end{cases}
\end{equation*}
the incompressibility of the fluid,
\begin{equation*}\label{incom-1}
u_x + v_y + w_z = 0,
\end{equation*}
and the irrotational condition,
\begin{equation*}\label{irrot-1}
(w_y-v_z, u_z-w_x, v_x-u_y)^T = (0,0,0)^T.
\end{equation*}
The pressure is written as
\begin{equation*}
P(t, x,z) = P_a + \rho g(h_0 - z) + p(t, x, y, z),
\end{equation*}
where $P_a$ is the constant atmosphere pressure, and $p$ is a pressure variable measuring the hydrostatic pressure distribution.

The dynamic condition posed on the surface $z = h_0 + \eta$ yields $P = P_a$. Then there appears that
\begin{equation*}\label{perssure-1}
p = \rho g \eta.
\end{equation*}
Meanwhile, the kinematic condition on the surface is given by
\begin{equation*}\label{KC-1}
w = \eta_t + u\eta_x + v\eta_y, \quad \mbox{when} \quad z = h_0 + \eta(t, x, y).
\end{equation*}
Finally, we pose "no-flow" condition at the flat bottom $z = 0$, that is,
\begin{equation*}\label{bottom-1}
w|_{z=0} = 0.
\end{equation*}

Consider the two-dimensional flows, moving in the zonal direction along the equator independent of the $y$-coordinate, in other words, $v \equiv 0$ throughout the flow, the irrotational condition will be simplified as $u_z-w_x=0$. According to the magnitude of the physical quantities, we introduce dimensionless quantities as follows
\begin{equation*}
x \rightarrow \lambda x,\quad z \rightarrow h_0 z,\quad \eta \rightarrow a \eta,\quad t \rightarrow \frac{\lambda}{\sqrt{gh_0}}t,
\end{equation*}
which implies
\begin{equation*}
u \rightarrow \sqrt{gh_0}u,\quad w \rightarrow \sqrt{\mu gh_0} w,\quad p \rightarrow \rho g h_0 p.
\end{equation*}
And under the influence of the Earth rotation, we introduce
\begin{equation*}\label{rescall-1}
\Omega = \sqrt{{h_0}/{g}} \, \Omega_0.
\end{equation*}
Furthermore, considering whenever $\varepsilon \rightarrow 0$,
\begin{equation*}
u \rightarrow 0,\quad w \rightarrow 0,\quad p \rightarrow 0,
\end{equation*}
that is, $u, w$ and $p$ are proportional to the wave amplitude so that
we require a scaling
\begin{equation*}\label{rescall-2}
u \rightarrow \varepsilon u,\quad w \rightarrow \varepsilon w,\quad p \rightarrow \varepsilon p.
\end{equation*}
Therefore the governing equations become
\begin{equation}\label{governing}
\begin{cases}
u_t + \varepsilon(uu_x+wu_z)+2\Omega w = - p_x &\text{in}\quad 0 < z < 1+\varepsilon\eta(t,x),  \\
\mu \{w_t + \varepsilon (uw_x + ww_z)\} - 2\Omega u = -p_z &\text{in}\quad 0 < z < 1+\varepsilon\eta(t,x), \\
u_x + w_z = 0 &\text{in}\quad 0 < z < 1+\varepsilon\eta(t,x),\\
u_z - \mu w_x = 0 &\text{in}\quad 0 < z < 1+\varepsilon\eta(t,x), \\
p = \eta &\text{on}\quad z = 1 + \varepsilon\eta(t,x),\\
w = \eta_t + \varepsilon u \eta_x &\text{on}\quad z = 1 + \varepsilon\eta(t,x),\\
w = 0 & \text{on}\quad z = 0.
\end{cases}
\end{equation}

To derive the R-CH equation for shallow water waves, we first introduce a suitable scale and a double asymptotic expansion to get equations in groups with respect to $\varepsilon$ and $\mu$ independent on each other, where $\varepsilon,\, \mu \ll 1$.

Let $c$ be the group speed of water waves. We can apply a suitable far field variable together with a propagation problem \cite{Jo1, Jo2}
\begin{equation} \label{notation-1}
\xi = \varepsilon^{1/2}(x-ct),\quad \tau = \varepsilon^{3/2}t,
\end{equation}
which implies, for consistency from the equation of mass conservation, that we also transform
\begin{equation*}
w = \sqrt{\varepsilon} \,W.
\end{equation*}
Then the governing equations \eqref{governing} become
\begin{equation}\label{Euler-1}
\begin{cases}
- c u_{\xi} + \varepsilon (u_\tau + uu_\xi + Wu_z) + 2\Omega W = - p_\xi \quad &  \text{in}\quad 0 < z < 1 + \varepsilon \eta,\\
\varepsilon\mu \{- c W_\xi + \varepsilon (W_\tau + u W_\xi + WW_z)\} - 2\Omega u = - p_z \quad &  \text{in}\quad 0 < z < 1 + \varepsilon \eta,\\
u_\xi + W_z = 0 \quad & \text{in}\quad 0 < z < 1 + \varepsilon \eta,\\
u_z - \varepsilon\mu W_\xi = 0 \quad & \text{in}\quad 0 < z < 1 + \varepsilon \eta,\\
p = \eta \quad & \text{on}\quad z = 1+ \varepsilon \eta,\\
W = - c \eta_\xi + \varepsilon (\eta_\tau + u \eta_\xi) \quad & \text{on}\quad z = 1+ \varepsilon \eta,\\
W = 0 \quad & \text{on} \quad z = 0.
\end{cases}
\end{equation}
A double asymptotic expansion is introduced to seek a solution of the system \eqref{Euler-1},
\begin{equation*}
q \sim \sum_{n=0}^{\infty} \sum_{m=0}^{\infty}\varepsilon^n \mu^m q_{nm}
\end{equation*}
as $\varepsilon \rightarrow 0, \mu \rightarrow 0$, where $q$ will be taken the scale functions $u, \,W, \,p$ and $\eta$, and all the functions $q_{nm}$ satisfiy the far field conditions $q_{nm} \rightarrow 0$ as $|\xi|\rightarrow \infty$ for every $n, \,m=0, 1, 2, 3, ...$.

Substituting the asymptotic expansions of $u, \,W, \,p,\,\eta$ into \eqref{Euler-1}, we check all the coefficients of the order $O(\varepsilon^i\mu^j)$ ($i, \, j=0, 1, 2, 3, ...$).

From the order $O(\varepsilon^0 \mu^0)$ terms of \eqref{Euler-1} we obtain from the Taylor expansion
\begin{equation}\label{taylor-1}
f(z)=f(1)+\sum_{n=1}^{\infty} \frac{(z-1)^n}{n!}f^{(n)}(1)
\end{equation}
that
\begin{equation}\label{equation-00}
\begin{cases}
-c u_{00,\xi} + 2\Omega W_{00} = - p_{00,\xi} &\text{in}\quad 0 < z < 1,\\
2\Omega u_{00} = p_{00,z} &\text{in}\quad 0 < z < 1,\\
u_{00,\xi} + W_{00,z} = 0 &\text{in}\quad 0 < z < 1,\\
u_{00,z} = 0 &\text{in}\quad 0 < z < 1,\\
p_{00} = \eta_{00}, \quad W_{00} = - c \eta_{00,\xi} & \text{on} \quad z = 1,\\
W_{00} = 0 & \text{on} \quad z = 0.
\end{cases}
\end{equation}
To solve the system \eqref{equation-00}, we first obtain from the fourth equation in \eqref{equation-00} that $u_{00}$ is independent of $z$, that is,
$u_{00} = u_{00}(\tau, \xi)$.
 Thanks to the third equation in \eqref{equation-00} and the boundary condition of $W$ on $z=0$, we get
\begin{equation} \label{w00-1}
W_{00} =W_{00}|_{z = 0} + \int_0^z W_{00,z'} dz' = -\int_0^z u_{00,\xi}\, dz'= - z u_{00,\xi},
\end{equation}
which along with the boundary condition of $W$ on $z=1$ implies
$u_{00,\xi}(\tau, \xi) = c\eta_{00,\xi}(\tau, \xi)$.
Therefore, we have
\begin{equation}\label{w-00}
u_{00}(\tau, \xi) = c\eta_{00}(\tau, \xi), \quad W_{00} = -cz\eta_{00,\xi},
\end{equation}
here use has been made of the far field conditions $u_{00}, \, \eta_{00} \rightarrow 0$ as $|\xi| \rightarrow \infty$.

On the other hand, from the second equation in \eqref{equation-00}, there appears that
\begin{equation}\label{p-00-1}
p_{00}= p_{00}|_{z = 1} + \int_1^z p_{00,z'} \,dz'=\eta_{00}+2\Omega \int_1^z u_{00} \,dz'=\eta_{00}+2\Omega (z-1) u_{00},
\end{equation}
which along with $u_{00,\xi} = c\eta_{00,\xi}$ implies
\begin{equation}\label{p-00-2}
p_{00, \xi}=\big(\frac{1}{c}+2\Omega (z-1)\big) u_{00, \xi},
\end{equation}
Combining \eqref{p-00-2} with \eqref{w00-1} and the first equation in \eqref{equation-00} gives rise to
$(c^2 + 2\Omega c - 1) u_{00, \xi} = 0$,
which follows that
\begin{equation}\label{c-1-2}
c^2 + 2\Omega c - 1= 0,
\end{equation}
if we assume that $u_{00}$ is an non-trivial velocity. Therefore, when consider the waves move towards to the right side, we may obtain
\begin{equation}\label{c-1-3}
c = \sqrt{1 + \Omega^2} - \Omega.
\end{equation}

Similarly, vanishing the orders $O(\varepsilon^0 \mu^1)$, $O(\varepsilon^2 \mu^0)$, $O(\varepsilon^1 \mu^1)$, $O(\varepsilon^3 \mu^0)$, $O(\varepsilon^4 \mu^0)$, and $O(\varepsilon^2 \mu^1)$ terms of \eqref{Euler-1} respectively, we may obtain
\begin{equation}\label{uwp-01-1}
\begin{split}
&u_{01} = c\eta_{01}=c\eta_{01}(\tau, \xi),\\
&u_{20}=u_{20}(\tau, \xi)= c \eta_{20} - 2(c+c_1)\eta_{00}\eta_{10}-\frac{2c_1-3\Omega}{3(c+\Omega)}(c+c_1)\eta_{00}^3,\\
&u_{11} =u_{11}(\tau, \xi)= \left(\frac{c}{6} - \frac{2c_1}{9} -\frac{c}{2}z^2 \right) \eta_{00,\xi\xi} + c \eta_{11} - 2 (c+c_1) \eta_{00}\eta_{01},\\
&u_{30}=u_{30}(\tau, \xi)= c \eta_{30} -  2(c+c_1)(\eta_{00}\eta_{20}) -(c+c_1)(\eta_{10}^2)-\frac{2c_1-3\Omega}{\Omega+c}(c+c_1)(\eta_{00}^2\eta_{10})\\
&\qquad -\frac{(64cc_1+24c_1^2+45c^2+24\Omega^2-3)}{24(c+\Omega)^2}(c+c_1)(\eta_{00}^4),
\end{split}\end{equation}
and
\begin{equation}\label{eta-30-eqn}
\begin{split}
&2(c+\Omega)\eta_{30,\tau} +3c^2(\eta_{00}\eta_{30}+\eta_{10}\eta_{20})_{\xi} -2(3c+2c_1)(c+c_1)(\eta_{00}^2\eta_{20}+\eta_{00}\eta_{10}^2)_{\xi}\\
&\quad-\frac{(64cc_1+24c_1^2+45c^2-15)}{3(c+\Omega)}(c+c_1)(\eta_{00}^3\eta_{10})_{\xi}-B_2(\eta_{00}^5)_{\xi}=0,\\
&2(\Omega + c)\eta_{11,\tau} + 3c^2(\eta_{00}\eta_{11}+\eta_{10}\eta_{01})_\xi-2(c+c_1)(3c+2c_1)(\eta_{00}^2\eta_{01})_\xi+\frac{c^2}{3}\eta_{10,\xi\xi\xi}\\
&-\left(\frac{c^2}{6}+\frac{10c c_1}{9}+\frac{2 c_1^2}{9}\right)(\eta_{00,\xi}^2)_{\xi}-\left(\frac{c^2}{3}+\frac{20 c c_1}{9}+\frac{8 c_1^2}{9}\right)(\eta_{00}\eta_{00,\xi\xi})_{\xi}=0.
\end{split}\end{equation}
with
\begin{equation}\label{c-0-0}
c_1 \overset{\text{def}}{=} -\frac{3c^2}{4(\Omega + c)}=-\frac{3 c^3}{2 (c^2 + 1)},
\end{equation}
\begin{equation*}\begin{split}
B_1\overset{\text{def}}{=} &\frac{(c+c_1)^2(82cc_1+36c_1^2+45c^2-18\Omega c_1-27\Omega c-15)}{3(\Omega+c)^2}\\
&+\frac{c_1(c+c_1)(64cc_1+24c_1^2+45c^2+24\Omega^2-3)}{3(\Omega+c)^2},
\end{split}\end{equation*}
and
\begin{equation*}\begin{split}
B_2&\overset{\text{def}}{=} \frac{1}{5}B_1-\frac{(c+c_1)^2(2c_1-3\Omega)}{3(\Omega+c)}+\frac{2c(c+c_1)(64cc_1+24c_1^2+45c^2+24\Omega^2-3)}{12(\Omega+c)^2}\\
&=\frac{c^2(2-c^2)(3c^{10}+228c^8-540c^6-180c^4-13c^2+42)}{60(c^2+1)^6}.
\end{split}\end{equation*}
More details should be found in Appendix A.

Taking $\eta := \eta_{00} + \varepsilon \eta_{10} + \varepsilon^2 \eta_{20}+ \varepsilon^3 \eta_{30}+ \mu \eta_{01} + \varepsilon\mu\eta_{11}+O(\varepsilon^4,\mu^2)$.
Multiplying the equations \eqref{A-eta-00-eqn}, \eqref{A-eta-10-eqn}, \eqref{A-eta-01-eqn}, \eqref{A-eta-20-eqn}, \eqref{A-eta-30-eqn}, and \eqref{A-eta-11-eqn} by $1$, $\varepsilon$, $\mu$, $\varepsilon^2$, $\varepsilon^3$, and $\varepsilon \mu$, respectively, and then summating the results, we get the equation of $\eta$ up to the order $O(\varepsilon^4,\mu^2)$ that
\begin{equation}\label{etaepsilon3}
\begin{split}
&2(\Omega + c) \eta_\tau + 3 c^2 \eta \eta_\xi + \frac{c^2}{3} \mu \eta_{\xi\xi\xi} + \varepsilon A_1 \eta^2 \eta_\xi+\varepsilon^2 A_2 \eta^3 \eta_{\xi}+A_0\varepsilon^3\eta^4 \eta_{\xi} \\
&= \varepsilon \mu\bigg(A_3 \eta_{\xi}\eta_{\xi\xi} +A_4 \eta\eta_{\xi\xi\xi}\bigg) + O(\varepsilon^4,\mu^2),
\end{split}
\end{equation}
where $c_1 =-\frac{3 c^3}{2 (c^2 + 1)}$ is defined in \eqref{c-0-0},
$ A_1 \overset{\text{def}}{=}  - 2(3c+2c_1)(c+c_1)= \frac{3c^2(c^2-2)}{(c^2+1)^2}$,
 $ A_2 \overset{\text{def}}{=}  -\frac{(64cc_1+24c_1^2+45c^2-15)}{3(c+\Omega)}(c+c_1)= -\frac{c^2(2-c^2)(c^6-7c^4+5c^2-5)}{(c^2+1)^4}$,
$ A_3  \overset{\text{def}}{=}  \frac{2c^2}{3}+\frac{40c c_1}{9}+\frac{4 c_1^2}{3}= \frac{-c^2(9c^4+16c^2-2)}{3(c^2+1)^2}$,
$ A_4  \overset{\text{def}}{=}   \frac{c^2}{3}+\frac{20 c c_1}{9}+\frac{8 c_1^2}{9}=\frac{-c^2(3c^4+8c^2-1)}{3(c^2+1)^2}$, $A_{0} \overset{\text{def}}{=} \frac{c^2(c^2-2)(3c^{10}+228c^8-540c^6-180c^4-13c^2+42)}{12(c^2+1)^6}$.

On the other hand, notice that
$u_{00} = c \eta_{00}$, $u_{10} = c\eta_{10} - (c_1 + c)\eta^2_{00}$, $u_{01} = c \eta_{01}$, $u_{11} = c \eta_{11} - 2(c_1 + c)\eta_{00}\eta_{01} + \left( \frac{c}{6}-\frac{2c_1}{9}-\frac{cz^2}{2}\right)\eta_{00,\xi\xi}$,
$u_{20}= c \eta_{20} - 2(c+c_1)(\eta_{00}\eta_{10})-\frac{2c_1-3\Omega}{3(c+\Omega)}(c+c_1)(\eta_{00}^3)$,
 and
\begin{equation*}
\begin{split}
u_{30}= c \eta_{30} -  2(c+c_1)(\eta_{00}\eta_{20}) &-(c+c_1)(\eta_{10}^2)-\frac{2c_1-3\Omega}{\Omega+c}(c+c_1)(\eta_{00}^2\eta_{10})\\
&-\frac{(64cc_1+24c_1^2+45c^2+24\Omega^2-3)}{24(c+\Omega)^2}(c+c_1)(\eta_{00}^4),
\end{split}
\end{equation*}
we obtain
\begin{equation*}\label{u-equation-12}
\begin{split}
&\eta_{00}=\frac{1}{c}u_{00},\, \eta_{10}=\frac{1}{c}u_{10}+\gamma_1 u^2_{00}, \, \eta_{01}=\frac{1}{c}u_{01},\, \eta_{20} =\frac{1}{c}u_{20}+ 2\gamma_1 u_{00}u_{10}+\gamma_2 u_{00}^3,\\
& \eta_{30}= \frac{1}{c}u_{30} +\gamma_1 u_{10}^2+ 2 \gamma_1 u_{00}u_{20}+ 3 \gamma_2 u_{00}^2u_{10}+\gamma_3 u_{00}^4,\\
& \eta_{11} = \frac{1}{c}u_{11} + 2 \gamma_1 u_{00}u_{01} +\gamma_4 u_{00,\xi\xi},
\end{split}
\end{equation*}
where  $\gamma_1 \overset{\text{def}}{=} \frac{c_1 + c}{c^3}$, $\gamma_2 \overset{\text{def}}{=}\frac{2(c+c_1)^2}{c^5}+\frac{(2c_1-3\Omega)(c+c_1)}{3c^4(c+\Omega)}$,
$\gamma_3\overset{\text{def}}{=}\frac{5(c+c_1)^3}{c^7}+\frac{5(2c_1-3\Omega)(c+c_1)^2}{3c^6(c+\Omega)}
+\frac{(64cc_1+24c_1^2+45c^2+24\Omega^2-3)}{24c^5(c+\Omega)^2}(c+c_1)$,
$\gamma_4 \overset{\text{def}}{=} -\left( \frac{1}{6c}-\frac{2c_1}{9c^2}-\frac{z^2}{2c}\right)$,
or it is the same,
\begin{equation}\label{gamma-defi-2}
\begin{split}
&\gamma_1=\frac{2-c^2}{2c^2(c^2+1)}, \quad \gamma_2 =\frac{(c^2-1)(c^2-2)(2c^2+1)}{2c^3(c^2+1)^3},\\
&\gamma_3
=-\frac{(c^2-1)^2(c^2-2)(21c^4+16c^2+4)}{8c^4(c^2+1)^5}, \quad \gamma_4 =\frac{z^2}{2c}-\frac{3c^2+1}{6c(c^2+1)}.
\end{split}
\end{equation}
Therefore, it follows that
\begin{equation*}\label{u-equation-13a}
\begin{split}
\eta &=\eta_{00} + \varepsilon \eta_{10}+ \varepsilon^2 \eta_{20} + \mu \eta_{01}+ \varepsilon^3 \eta_{30} + \varepsilon\mu\eta_{11}+O(\varepsilon^4,\mu^2)\\
&=\frac{1}{c}u_{00}+\varepsilon\bigg(\frac{1}{c}u_{10}+\gamma_1 u^2_{00}\bigg)+\varepsilon^2\bigg(\frac{1}{c}u_{20}+ 2\gamma_1 u_{00}u_{10}+\gamma_2 u_{00}^3\bigg)\\
&+\mu \frac{1}{c}u_{01}+\varepsilon\mu\bigg(\frac{1}{c}u_{11} + 2 \gamma_1 u_{00}u_{01} +\gamma_4 u_{00,\xi\xi}\bigg)\\
& +\varepsilon^3\bigg( \frac{1}{c}u_{30} +\gamma_1 u_{10}^2+ 2 \gamma_1 u_{00}u_{20}+ 3 \gamma_2 u_{00}^2u_{10}+\gamma_3 u_{00}^4\bigg) +O(\varepsilon^4,\mu^2).
\end{split}
\end{equation*}
which along with $u =u_{00} + \varepsilon u_{10}+ \varepsilon^2 u_{20} + \mu u_{01}+ \varepsilon^3 u_{30} + \varepsilon\mu u_{11}+O(\varepsilon^4,\mu^2)$  yields
\begin{equation}\label{eta-u}
\begin{split}
  \eta = \frac{1}{c}u + \gamma_1 \varepsilon u^2 + \gamma_2\varepsilon^2u^3 + \gamma_3\varepsilon^3u^4 + \gamma_4\varepsilon\mu u_{\xi\xi} +O(\varepsilon^4,\mu^2),
\end{split}
\end{equation}
where $\gamma_i$ ($i=1, 2, 3, 4$) are defined in \eqref{gamma-defi-2} and the parameter $z \in [0, 1]$.

\begin{remark}\label{rmk-eta-form}
From the above derivation, we know that, in the free-surface incompressible irrotational Euler equations in the equatorial region, the relation between the free surface $\eta$ and the horizontal velocity $u$ formally obeys the equation \eqref{eta-u}, with or without Coriollis effect. It also illustrates that, all the classical models, such as the classical KdV equation, the BBM equation, or the (improved) Boussinesq equation, can be also formally derived from relation \eqref{eta-u} with the KdV regime $\varepsilon=O(\mu)$ in the equatorial region.
\end{remark}

In the following steps, we will derive the equation for $ u $ from express \eqref{etaepsilon3}. \\
In view of  \eqref{eta-u}, we have
\begin{equation}\label{etaepsilon3-2}
\begin{split}
2(\Omega + c) \eta_\tau =&\frac{2(\Omega + c)}{c} u_{\tau} + \frac{2(\Omega + c)(c_1+c)}{c^3}\varepsilon (u^2)_{\tau} + 2(\Omega + c)\gamma_2\varepsilon^2(u^3)_{\tau} \\
&+ 2(\Omega + c) \gamma_{3}\varepsilon^3(u^4)_{\tau}+ 2(\Omega + c)\gamma_4\varepsilon\mu u_{\tau\xi\xi} +O(\varepsilon^4,\mu^2),
\end{split}
\end{equation}
and
\begin{equation*}
\begin{split}
 3c^2\eta\eta_{\xi} &\;= \frac{3c^2}{2}\bigg((\frac{1}{c}u + \frac{c_1+c}{c^3}\varepsilon u^2 +  \gamma_2\varepsilon^2u^3 +
   \gamma_3 \varepsilon^3u^4)^2+ \gamma_4\varepsilon\mu u_{\xi\xi} \bigg)_{\xi}+O(\varepsilon^4,\mu^2) \\
 &\; = \frac{3c^2}{2}\bigg(\frac{1}{c^2}u^2 + \frac{2(c_1+c)}{c^4}\varepsilon u^3 +( \frac{(c_1+c)^2}{c^6} + \frac{2}{c}\gamma_2)\varepsilon^2u^4 +\frac{2}{c} \gamma_4 \mu\varepsilon uu_{\xi\xi} \\
 &\; \qquad\quad + (\frac{2}{c}\gamma_3 + \frac{2(c_1+c)}{c^3}\gamma_2)\varepsilon^3u^5\bigg)_{\xi}+O(\varepsilon^4,\mu^2). \\
\end{split}
\end{equation*}
Similarly, we may get
\begin{equation*}
  \frac{c^2}{3} \mu \eta_{\xi\xi\xi} =\frac{c^2}{3}\mu(\frac{1}{c}u + \frac{c_1+c}{c^3}\varepsilon u^2)_{\xi\xi\xi}+O(\varepsilon^4,\mu^2),
\end{equation*}
\begin{equation*}
\begin{split}
\varepsilon \mu\bigg(A_3 \eta_{\xi}\eta_{\xi\xi} +A_4 \eta\eta_{\xi\xi\xi}\bigg) =\varepsilon \mu\bigg(\frac{A_3 }{c^2}u_{\xi}u_{\xi\xi} +\frac{A_4 }{c^2} uu_{\xi\xi\xi}\bigg)+ O(\varepsilon^4,\mu^2),
\end{split}
\end{equation*}
\begin{equation*}
\begin{split}
 A_1\varepsilon\eta^2\eta_{\xi} =\frac{A_{1}}{3}\varepsilon\bigg[ \frac{1}{c^3}u^3 + \frac{3(c_1+c)}{c^5}\varepsilon u^4 + (\frac{3(c_1+c)^2}{c^7}+\frac{3}{c^2}\gamma_2)\varepsilon^2 u^5 \bigg]_{\xi}+O(\varepsilon^4,\mu^2),
\end{split}
\end{equation*}
\begin{equation*}
\begin{split}
 A_2\varepsilon^2\eta^3\eta_{\xi}  = \frac{A_2}{4c^4}\varepsilon^2(u^4)_{\xi} + \frac{A_2(c_1+c)}{c^6}\varepsilon^3(u^5)_{\xi}+O(\varepsilon^4,\mu^2),
\end{split}
\end{equation*}
and
\begin{equation*}
\begin{split}
-5B_2\varepsilon^3\eta^4\eta_{\xi} = -\frac{B_2}{c^5}\varepsilon^3(u^5)_{\xi}+O(\varepsilon^4,\mu^2).\\
\end{split}
\end{equation*}
Hence, we deduce from the equation \eqref{etaepsilon3} that
\begin{equation}\label{uepsilon3_A7}
\begin{split}
 u_{\tau}&\;+\frac{2(c_1+c)}{c^2}\varepsilon uu_{\tau} + 3\gamma_2 c\varepsilon^2u^2u_{\tau} + \gamma_4 c\varepsilon\mu u_{\tau\xi\xi}+4 \gamma_3 c\varepsilon^3u^3u_{\tau}
     + \frac{3c}{2(\Omega+c)}uu_{\xi}  \\
&\;+ \frac{cA_5}{2(\Omega+c)}\varepsilon^2u^3u_{\xi} + \frac{cA_6}{2(\Omega+c)}\varepsilon u^2u_{\xi} + \frac{c^2}{6(\Omega+c)}\mu u_{\xi\xi\xi} + \frac{cA_{7}}{2(\Omega+c)}\varepsilon^3 u^4u_{\xi}\\
&\;+ (\frac{cA_8}{2(\Omega+c)}u_{\xi}u_{\xi\xi} + \frac{cA_{9}}{2(\Omega+c)}uu_{\xi\xi\xi})\varepsilon\mu
= O(\varepsilon^4, \varepsilon^2\mu, \mu^2),
\end{split}
\end{equation}
where $A_5 :=  \frac{6(c_1+c)^2}{c^4} + 12c \gamma_2 + \frac{4A_1(c_1+c)}{c^5}\varepsilon^2 + \frac{A_2}{c^4}$,
$A_6 :=  \frac{9(c_1+c)}{c^2} + \frac{A_1}{c^3}$,
$A_8 :=  3c \gamma_4+ \frac{2(c_1+c)}{c} - \frac{A_3}{c^2}$,
$A_{9}:=   3c \gamma_4 + \frac{2(c_1+c)}{3c} - \frac{A_4}{c^2}$,
and $A_{7}:= 5\bigg[ \frac{3}{2}c^2(\frac{2 }{c}\gamma_3+\frac{2(c_1+c)}{c^3}\gamma_2) + \frac{A_1}{3}(\frac{3}{c^7}(c_1+c)^2+\frac{3}{c^2}\gamma_2) + \frac{A_2(c_1+c)}{c^6} - \frac{B_2}{c^5} \bigg]$.

Hence, we obtain
\begin{equation*}
\begin{split}
 \varepsilon uu_{\tau}&\; = -\varepsilon u \bigg( \frac{2(c_1+c)}{c^2}\varepsilon uu_{\tau} + 3 \gamma_2 c\varepsilon^2u^2u_{\tau} + \frac{3c}{2(\Omega+c)}uu_{\xi}
 + \frac{cA_5}{2(\Omega+c)}\varepsilon^2u^3u_{\xi}   \\
&\;\qquad\qquad+ \frac{cA_6}{2(\Omega+c)}\varepsilon u^2u_{\xi} + \frac{c^2}{6(\Omega+c)}\mu u_{\xi\xi\xi}\bigg)
+ O(\varepsilon^4, \varepsilon^2\mu, \mu^2),
\end{split}
\end{equation*}
which implies
\begin{equation*}
\begin{split}
 \varepsilon u\bigg(1+\frac{2(c_1+c)}{c^2}\varepsilon u &\;+3\gamma_2 c\varepsilon^2u^2\bigg)u_{\tau} = -\varepsilon u \bigg(  \frac{3c}{2(\Omega+c)}uu_{\xi}
 + \frac{cA_5}{2(\Omega+c)}\varepsilon^2u^3u_{\xi}   \\
&\;+ \frac{cA_6}{2(\Omega+c)}\varepsilon u^2u_{\xi} + \frac{c^2}{6(\Omega+c)}\mu u_{\xi\xi\xi}\bigg)
+ O(\varepsilon^4, \varepsilon^2\mu, \mu^2).
\end{split}
\end{equation*}
This follows that
\begin{equation*}
\begin{split}
\varepsilon uu_{\tau}&\; = -\varepsilon u \bigg[1- (\frac{2(c_1+c)}{c^2}\varepsilon u+3\gamma_2 c\varepsilon^2u^2) + (\frac{2(c_1+c)}{c^2}\varepsilon u)^2 \bigg]
    \bigg[ \frac{3c}{2(\Omega+c)}uu_{\xi}  \\
&\;\quad+ \frac{cA_5}{2(\Omega+c)}\varepsilon^2u^3u_{\xi} + \frac{cA_6}{2(\Omega+c)}\varepsilon u^2u_{\xi} + \frac{c^2}{6(\Omega+c)}\mu u_{\xi\xi\xi}\bigg]
    + O(\varepsilon^4, \mu^2),
\end{split}
\end{equation*}
and then
\begin{equation}\label{uuT}
\begin{split}
\varepsilon uu_{\tau}&\; = -\varepsilon u\bigg[ \frac{3c}{2(\Omega+c)}uu_{\xi} + \frac{c^2}{6(\Omega+c)}\mu u_{\xi\xi\xi}
  + \frac{c^2A_6-6(c_1+c)}{2c(\Omega+c)}\varepsilon u^2u_{\xi} \\
&\;\quad + \frac{c^2A_5 - 2A_6(c_1+c) + 3c^2(\frac{4(c_1+c)^2}{c^4}-3 \gamma_2 c)}{2c(\Omega+c)}\varepsilon^2u^3u_{\xi} \bigg]+ O(\varepsilon^4, \mu^2),
\end{split}
\end{equation}
\begin{equation} \label{u3uT}
\begin{split}
\varepsilon^2 u^2u_{\tau} =&\; -\varepsilon^2 u^2\bigg[ \frac{3c}{2(\Omega+c)}uu_{\xi}  + \frac{c^2A_6-6(c_1+c)}{2c(\Omega+c)}\varepsilon u^2u_{\xi} \bigg]
  + O(\varepsilon^4, \varepsilon^2\mu, \mu^2), \\
\varepsilon^3 u^3u_{\tau} =&\; - \frac{3c}{2(\Omega+c)}\varepsilon^3 u^4 u_{\xi}  + O(\varepsilon^4, \mu^2), \quad
\varepsilon\mu  u_{\tau\xi\xi}=- \frac{3c}{2(\Omega+c)} \varepsilon\mu   (u u_{\xi})_{\xi\xi} + O(\varepsilon^4, \mu^2)
\end{split}
\end{equation}
Decompose $\varepsilon\mu u_{\tau\xi\xi}$ into  $\varepsilon\mu (1-\nu)u_{\tau\xi\xi} + \varepsilon\mu\nu u_{\tau\xi\xi}$ for some constant $\nu$ (to be determined later), we may get from \eqref{u3uT} that
\begin{equation} \label{nu}
\begin{split}
\varepsilon\mu u_{\tau\xi\xi}= \varepsilon\mu (1-\nu)u_{\tau\xi\xi} - \frac{3c \nu}{2(\Omega+c)} \varepsilon\mu   (u u_{\xi})_{\xi\xi} + O(\varepsilon^4, \mu^2).
\end{split}
\end{equation}
Substituting \eqref{uuT}-\eqref{nu} into \eqref{uepsilon3_A7}, we obtain that
\begin{equation*}
\begin{split}
 u_{\tau}&\;+ c\gamma_4(1-\nu)\mu\varepsilon u_{\tau\xi\xi} + \frac{3c}{2(\Omega+c)}uu_{\xi}+\frac{c^2}{6(\Omega+c)}\mu u_{\xi\xi\xi}-\frac{9c^2 \gamma_2}{2(\Omega+c)}\varepsilon^2u^3u_{\xi}\\
&\; - \frac{3c^2\gamma_4\nu}{2(\Omega+c)}\mu\varepsilon(u u_{\xi})_{\xi\xi}  + \frac{2(c_1+c)}{c^2}\varepsilon\bigg[\frac{3c}{2(\Omega+c)}u^2u_{\xi}
+ \frac{c^2}{6(\Omega+c)}\mu uu_{\xi\xi\xi} \\
&\; + \frac{c^2A_6-6(c_1+c)}{2c(\Omega+c)}\varepsilon u^3u_{\xi}\bigg]+ \frac{cA_5}{2(\Omega+c)}\varepsilon^2u^3u_{\xi} + \frac{cA_6}{2(\Omega+c)}\varepsilon u^2u_{\xi} \\
&\; + \mu\varepsilon(\frac{cA_8}{2(\Omega+c)}u_{\xi}u_{\xi\xi} + \frac{cA_{9}}{2(\Omega+c)}uu_{\xi\xi\xi}) + A_{10}\varepsilon^3 u^4u_{\xi}
=  O(\varepsilon^4, \mu^2),
\end{split}
\end{equation*}
where
\begin{equation*}
\begin{split}
A_{10}:=&\; \frac{cA_{7}}{2(\Omega+c)}-\frac{(c_1+c)\bigg(c^2A_5 - 2A_6(c_1+c) + 3c^2(\frac{4(c_1+c)^2}{c^4}-3\gamma_2c)\bigg)}{c^3(\Omega+c)} \\
&\;- \frac{3 \gamma_2(c^2A_6-6(c_1+c))+12c^2\gamma_3}{2(\Omega+c)},
\end{split}
\end{equation*}
which implies
\begin{equation}\label{uepsi3abb}
\begin{split}
u_{\tau}&\; + \frac{3c^2}{c^2+1}uu_{\xi}+\frac{c^3}{3(c^2+1)}\mu u_{\xi\xi\xi} + c \gamma_4(1-\nu)\mu\varepsilon u_{\tau\xi\xi}
 + A_{11}\varepsilon u^2u_{\xi} \\
&\; + A_{12}\varepsilon^2u^3u_{\xi}
 + A_{10}\varepsilon^3u^4u_{\xi}  + \mu\varepsilon\bigg[A_{13}uu_{\xi\xi\xi} + A_{14}u_{\xi}u_{\xi\xi}\bigg] = O(\varepsilon^4, \varepsilon^2\mu, \mu^2). \\
\end{split}
\end{equation}
where
$A_{11} := \frac{c^2A_6-6(c_1+c)}{2c(\Omega+c)}=\frac{-3c(c^2-1)(c^2-2)}{2(c^2+1)^3}$,
$A_{12} := \frac{cA_5}{2(\Omega+c)} - \frac{9c^2 \gamma_2}{2(\Omega+c)} - \frac{2(c_1+c)}{c^2}\frac{c^2A_6-6(c_1+c)}{2c(\Omega+c)}=\frac{(c^2-1)^2(c^2-2)(8c^2-1)}{2(c^2+1)^5}$,
$A_{13} := \frac{cA_{9}}{2(\Omega+c)}- \frac{3c^2 \gamma_4 \nu}{2(\Omega+c)} - \frac{c_1+c}{3(\Omega+c)}=\frac{3c^3 \gamma_4}{(c^2+1)}(1-\nu) +\frac{c^2(3c^4+8c^2-1)}{3(c^2+1)^3}$,
$A_{14} := \frac{cA_8}{2(\Omega+c)} - \frac{9c^2\gamma_4\nu}{2(\Omega+c)}= \frac{3c^3}{(c^2+1)}\gamma_4(1-3\nu)+\frac{c^2(6c^4+19c^2+4)}{3(c^2+1)^3}$.

Back to the original transformation
$ x=\varepsilon^{-\frac{1}{2}}\xi+c\varepsilon^{-\frac{3}{2}}\tau,\quad  t = \varepsilon^{-\frac{3}{2}}\tau$,
we have
\begin{equation*}
\begin{split}
\frac{\partial}{\partial\xi}  = \varepsilon^{-\frac{1}{2}}\partial_x, \quad \frac{\partial}{\partial \tau} = \varepsilon^{-\frac{3}{2}}(c\partial_x+ \partial_t).
\end{split}
\end{equation*}
Hence, according to this transformation, the equation \eqref{uepsi3abb} can be written as
\begin{equation*}
\begin{split}
&\; u_t  + c u_x +  \frac{3c^2}{c^2+1}\varepsilon uu_{x} + A_{11}\varepsilon^2 u^2u_{x}+ A_{12}\varepsilon^3u^3u_{x}+ c\gamma_4(1-\nu)\mu u_{txx} \\
&\; + \Big(\frac{c^3}{3(c^2+1)} - c^2 \gamma_4(1-\nu)\Big)\mu u_{xxx}
+ \mu\varepsilon\bigg(A_{13}uu_{xxx} + A_{14}u_{x}u_{xx}\bigg)  = O(\varepsilon^4, \mu^2).
\end{split}
\end{equation*}

In order to get the R-CH equation, we
need
\begin{equation*}
\begin{split}
&\frac{2c^2}{(c^2+1)}c \gamma_4(1-\nu)=2A_{13}=A_{14},
\end{split}
\end{equation*}
which yields
\begin{equation}\label{gamma-4-1}
\begin{split}
 &\frac{2c^3}{(c^2+1)}\gamma_4 =\frac{-c^2(3c^4+6c^2-5)}{6(c^2+1)^3}
\end{split}
\end{equation}
and then
\begin{equation*}
\begin{split}
\frac{2c^2}{(c^2+1)}c \gamma_4(1-\nu)=2A_{13}=A_{14}=\frac{-c^2(3c^4+8c^2-1)}{3(c^2+1)^3}.
\end{split}
\end{equation*}
Therefore, it enables us to derive the R-CH equation in the form
\begin{equation*}\label{R-CH-1-1}
\begin{split}
 u_t -\beta\mu  u_{xxt} + c u_x + 3\alpha\varepsilon uu_x - \beta_0\mu u_{xxx} &+ \omega_1 \varepsilon^2u^2u_x + \omega_2 \varepsilon^3u^3u_x   \\
&= \alpha\beta\varepsilon\mu( 2u_{x}u_{xx}+uu_{xxx}).
\end{split}
\end{equation*}
Combining \eqref{gamma-4-1} and \eqref{gamma-defi-2}, it is found  that the height parameter $z$ in $\gamma_4$ may take the value
\begin{equation}\label{height-2}
z_0 =\bigg(\frac {1}{2} - \frac{2}{3} \frac{1}{(c^2 + 1)} + \frac{4}{3} \frac{1}{(c^2 + 1)^2}\bigg)^{1/2}.
\end{equation}

\renewcommand{\theequation}{\thesection.\arabic{equation}}
\setcounter{equation}{0}


\section{Local well-posedness} \label{local}

Our attention in this section is now turned to the local-posedness issue for the  R-CH equation.
Recall the R-CH equation \eqref{R-CH-1} in terms of the evolution of $m$, namely, the equation \eqref{R-CH-m}. Applying the transformation
$u_{\varepsilon, \mu}(t, x) = \alpha \varepsilon u(\sqrt{\beta \mu}\,t,\sqrt{\beta \mu}\,x)$  to \eqref{R-CH-m},
we know that $u_{\varepsilon, \mu}(t, x)$ solves
\begin{equation*}
\begin{split}
u_t -  u_{xxt} + c u_x + 3 u u_x - \frac{\beta_0}{\beta} u_{xxx} +\frac{\omega_1}{\alpha^2}u^2 u_x+ \frac{\omega_2}{\alpha^3}u^3 u_x = 2 u_{x} u_{xx} + u u_{xxx},
\end{split}
\end{equation*}
and its corresponding three conserved quantities (still denoted by $I(u)$, $E(u)$, and $F(u)$) are as follows
\[
I(u) =\Int u\, dx, \quad  E(u)=\frac{1}{2}\Int u^2+u_x^2\,dx,
\] and
\[
 F(u)=\frac{1}{2}\Int c u^2+ u^3+\frac{\beta_0}{\beta}u_x^2+\frac{\omega_1}{6\alpha^2}u^4 + \frac{\omega_2}{10\alpha^3}u^5 +uu^2_x\,dx.
\]
And we also have two more forms of equations,
\begin{equation*}
\begin{cases}
m_t + u m_x + 2 u_x m +  c u_x - \frac{\beta_0}{\beta} u_{xxx} +\frac{\omega_1}{\alpha^2}u^2 u_x+ \frac{\omega_2}{\alpha^3}u^3 u_x = 0,\\
m = u - u_{xx},
\end{cases}
\end{equation*}
and
\begin{equation}\label{weak-RCH}
u_t + u u_x+\frac{\beta_0}{\beta}u_x + p * \partial_x\left\{\left(c-\frac{\beta_0}{\beta}\right)u + u^2+\frac{1}{2}u_x^2+\frac{\omega_1}{3\alpha^2}u^3+\frac{\omega_2}{4\alpha^3}u^4\right\}=0.
\end{equation}
where $p = \frac{1}{2}e^{-|x|}$.

Now we are in a position to state the local well-posedness result of the following Cauchy problem, which may be similarly obtained as in \cite{CL09, Danchin01}
(up to a slight modification).
\begin{equation}\label{rCH-Cauchy}
\begin{cases}
u_t -  u_{xxt} + c u_x + 3 u u_x - \frac{\beta_0}{\beta} u_{xxx} +\frac{\omega_1}{\alpha^2}u^2 u_x+ \frac{\omega_2}{\alpha^3}u^3 u_x = 2 u_{x} u_{xx} + u u_{xxx},\\
u|_{t = 0} = u_0.
\end{cases}
\end{equation}


\begin{theorem}\label{local}
Let $u_0 \in H^{s}(\mathbb{R})$ with $s > \frac{3}{2}$.  Then there exist a positive time $T>0$ and a unique solution $u \in C([0, T]; H^s(\mathbb{R})) \cap  C^1([0, T]; H^{s-1}(\mathbb{R}))$ to the Cauchy problem \eqref{rCH-Cauchy} with $u(0)=u_0$. Moreover, the solution $u$ depends continuously on the
initial value $u_0$. In addition, the Hamiltonians $I(u)$, $E(u)$ and $F(u)$ are independent of the existence time $t>0$.
\end{theorem}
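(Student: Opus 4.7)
The plan is to exploit the nonlocal reformulation \eqref{weak-RCH}, which casts the R-CH equation as a quasilinear transport equation
\begin{equation*}
u_t + \bigl(u+\tfrac{\beta_0}{\beta}\bigr)u_x = -\,p*\partial_x\Bigl\{(c-\tfrac{\beta_0}{\beta})u + u^2 + \tfrac{1}{2}u_x^2 + \tfrac{\omega_1}{3\alpha^2}u^3 + \tfrac{\omega_2}{4\alpha^3}u^4\Bigr\} \equiv F(u).
\end{equation*}
Because the convolution with $p=\frac12 e^{-|x|}$ gains two derivatives, $F$ maps $H^s$ into $H^s$ as soon as the term $u_x^2$ is tame, i.e.\ $s>3/2$ so that $u_x\in L^\infty$ by Sobolev embedding. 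I would follow the Kato/Danchin transport paradigm employed in \cite{CL09, Danchin01}: construct regularized solutions by Friedrichs mollification, obtain uniform $H^s$ bounds through a commutator estimate, pass to the limit, and then separately address uniqueness, continuity in the data, and conservation.

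For existence I would mollify by a Friedrichs projector $P_n$ truncating Fourier modes $|\xi|\le n$ and solve the resulting ODE
\begin{equation*}
\partial_t u^n + P_n\bigl[(P_n u^n + \tfrac{\beta_0}{\beta})\partial_x P_n u^n\bigr] = P_n F(P_n u^n),\qquad u^n(0)=P_n u_0,
\end{equation*}
globally in $H^s$ via Picard--Lindel\"of. To obtain a uniform lifespan I apply $\Lambda^s=(1-\partial_x^2)^{s/2}$, pair with $\Lambda^s u^n$ in $L^2$, and use the Kato--Ponce commutator bound
\begin{equation*}
\bigl\|[\Lambda^s,u]\partial_x v\bigr\|_{L^2}\le C\bigl(\|u_x\|_{L^\infty}\|v\|_{H^s}+\|u\|_{H^s}\|v_x\|_{L^\infty}\bigr),
\end{equation*}
together with Moser-type estimates $\|u^k u_x\|_{H^{s-1}}\le C\|u\|_{H^s}^{k+1}$ for the polynomial nonlinearities. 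This yields a differential inequality of the form $\tfrac{d}{dt}\|u^n\|_{H^s}^2 \le C(1+\|u^n\|_{H^s}^{2k})\|u^n\|_{H^s}^2$, and ODE comparison delivers a common existence time $T=T(\|u_0\|_{H^s})>0$ on which the approximants stay bounded. An Aubin--Lions argument, using that $\partial_t u^n$ is bounded in $H^{s-1}$, extracts a subsequence converging to a solution $u\in L^\infty([0,T];H^s)$, and a standard continuity argument upgrades this to $u\in C([0,T];H^s)\cap C^1([0,T];H^{s-1})$.

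Uniqueness then follows by estimating the difference $w=u_1-u_2$ of two solutions in $L^2$: subtracting the two equations, every nonlinearity becomes polynomial in the $u_i,u_{i,x}$ times $w$ or $w_x$, and since $\|u_i\|_{H^s}$ is a priori bounded one derives $\tfrac{d}{dt}\|w\|_{L^2}^2\le C\|w\|_{L^2}^2$, whence $w\equiv 0$ by Gronwall. For continuous dependence in the strong $H^s$ topology I would invoke the Bona--Smith technique: regularize the initial data $u_0$ to $J_\delta u_0$, combine the uniform estimates at high regularity with convergence in a lower norm, and interpolate. Conservation of $I(u),E(u),F(u)$ follows by testing the $m$-form of the equation against $1$, $u$, and the variational derivative $\delta F/\delta m$ respectively and integrating by parts; for solutions just above the threshold $s=3/2$ this is first verified for the smooth approximants $u^n$ and then passed to the limit using continuity of the three functionals on $H^s$. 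The main technical obstacle will be managing the commutator bookkeeping for the higher-power nonlinearities $\omega_1 u^2 u_x$ and $\omega_2 u^3 u_x$, which are absent in the classical CH case \cite{Danchin01}---each extra power of $u$ contributes a factor of $\|u\|_{L^\infty}\le C\|u\|_{H^s}$, raising the polynomial degree of the Riccati inequality but preserving its overall structure so that the argument still closes.
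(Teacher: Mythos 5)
The paper gives no proof of Theorem \ref{local} at all: it simply remarks that the result ``may be similarly obtained as in \cite{CL09, Danchin01} (up to a slight modification)'', and your sketch is precisely the standard Kato--Danchin scheme that those references carry out (Friedrichs regularization, commutator and Moser estimates for the $H^s$ energy, compactness, Bona--Smith for continuous dependence), so in substance you are reconstructing the intended argument. The one step I would flag is uniqueness: closing a Gronwall inequality for $w=u_1-u_2$ purely in $L^2$ does not work throughout the range $\frac32<s<2$, because the difference of the $\frac12 u_x^2$ terms inside the convolution produces $p\ast\partial_x\bigl(\tfrac12(u_{1,x}+u_{2,x})w_x\bigr)$, and after the natural integration by parts one is left with a contribution involving $u_{i,xx}\in H^{s-2}$, which fails to be in $L^2$ when $s<2$; the standard remedy (and what Danchin actually does) is to run the stability estimate for the difference in $H^{s-1}$ (or $B^{s-1}_{2,\infty}$) using transport estimates at that regularity. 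With that adjustment the argument closes exactly as you describe, and your observation that the extra cubic and quartic nonlinearities only raise the polynomial degree of the a priori bounds, without changing the structure of the proof, is correct.
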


Thanks to the scaling of the solution $u_{\varepsilon, \mu}(t, x) = \alpha \varepsilon u(\sqrt{\beta \mu}\,t,\sqrt{\beta \mu}\,x)$,  the large existence time for
Equation \eqref{R-CH-1} has the form $ \frac{T}{\epsilon}.$

Motivated to the method in \cite{Danchin01}, the following blow-up criterion can be also derived, and we omit details of its proof.
\begin{theorem}[Blow-up criterion]
Let $s > \frac{3}{2}$, $u_0 \in H^s$ and $u$ be the corresponding solution to \eqref{rCH-Cauchy} as in Theorem \ref{local}. Assume $T^*_{u_0}$ is the maximal time of existence. Then
\begin{equation}\label{blowup-criterion-1}
T^{\ast}_{u_0} < \infty \quad \Rightarrow \quad \int_0^{T^{\ast}_{u_0}} \|\partial_x u(\tau)\|_{L^{\infty}} d\tau = \infty.
\end{equation}
\end{theorem}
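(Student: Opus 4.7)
The plan is to transform the blow-up criterion into an a priori estimate of $\|u(t)\|_{H^s}$ controlled by $\int_0^t \|u_x(\tau)\|_{L^\infty} d\tau$, and then contrapose. I would start from the nonlocal transport form \eqref{weak-RCH}, which I rewrite as
\begin{equation*}
u_t + u u_x = -\frac{\beta_0}{\beta} u_x - \partial_x p * G(u,u_x), \qquad G(u,u_x) := \Big(c-\tfrac{\beta_0}{\beta}\Big) u + u^2 + \tfrac12 u_x^2 + \tfrac{\omega_1}{3\alpha^2} u^3 + \tfrac{\omega_2}{4\alpha^3} u^4.
\end{equation*}
Apply the nonhomogeneous Littlewood--Paley block $\Delta_q$ (equivalently $\Lambda^s=(1-\partial_x^2)^{s/2}$) to this equation, take the $L^2$ inner product with $\Delta_q u$, and sum in the Besov $B^s_{2,2}=H^s$ framework à la Danchin. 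The principal piece $\Delta_q(uu_x)$ produces a transport-type contribution whose antisymmetric part vanishes, leaving the commutator $[\Delta_q, u]\partial_x u$ to estimate.

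For the commutator term I would invoke the standard bound
\begin{equation*}
\sum_{q\ge -1} 2^{2qs} \bigl\|[\Delta_q, u]\partial_x u\bigr\|_{L^2}^2 \lesssim \|u_x\|_{L^\infty}^2 \|u\|_{H^s}^2,
\end{equation*}
valid for $s>3/2$. The low-order transport term $\tfrac{\beta_0}{\beta} u_x$ is linear and contributes trivially. The nonlocal term is handled by the smoothing property $\|\partial_x p * f\|_{H^s} \lesssim \|f\|_{H^{s-1}}$, together with the Moser-type algebra estimates $\|u^k\|_{H^{s-1}} \lesssim \|u\|_{L^\infty}^{k-1} \|u\|_{H^{s-1}}$ and $\|u_x^2\|_{H^{s-1}} \lesssim \|u_x\|_{L^\infty}\|u_x\|_{H^{s-1}}$. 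Crucially, the conservation of $E(u)=\tfrac12\int(u^2+u_x^2)\,dx$ in Theorem~\ref{local} yields a uniform bound on $\|u\|_{H^1}$ on $[0,T^*)$, and Sobolev embedding $H^1 \hookrightarrow L^\infty$ supplies $\|u\|_{L^\infty} \le C(u_0)$ so that the cubic and quartic terms are controlled without ever invoking higher Sobolev norms. Assembling these estimates gives
\begin{equation*}
\frac{d}{dt} \|u(t)\|_{H^s}^2 \le C\bigl(1+\|u_x(t)\|_{L^\infty}\bigr)\,\|u(t)\|_{H^s}^2,
\end{equation*}
where $C=C(s, u_0, c, \alpha, \beta, \beta_0, \omega_1, \omega_2)$ depends on the conserved $H^1$ bound.

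Gronwall then yields $\|u(t)\|_{H^s} \le \|u_0\|_{H^s} \exp\bigl(C t + C\int_0^t \|u_x(\tau)\|_{L^\infty} d\tau\bigr)$. If we had $T^*<\infty$ together with $\int_0^{T^*}\|u_x\|_{L^\infty}\,d\tau<\infty$, the right-hand side would remain bounded up to $t=T^*$, letting us extend the solution past $T^*$ via Theorem~\ref{local} (applied at a time close to $T^*$ with $H^s$-bounded data), contradicting the maximality. This proves the implication \eqref{blowup-criterion-1}.

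The main obstacle I anticipate is the presence of the higher-power nonlinear terms $u^3 u_x$ and $u^4 u_x$ coming from the Coriolis contribution, which are absent in the classical CH setting treated in \cite{Danchin01, CL09}. Without an a priori $L^\infty$ control these would force $\|u\|_{L^\infty}$-dependent factors into the Gronwall estimate and weaken the criterion. The point that unlocks the argument is that Theorem~\ref{local} already asserts conservation of $E(u)$, so the $H^1$ bound, and hence the $L^\infty$ bound, is automatic; once this is in hand, the higher-order nonlinearities are treated by the same Moser estimates as the quadratic ones, and the proof reduces to a careful but essentially routine adaptation of Danchin's Besov-space energy method.
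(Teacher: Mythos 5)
Your argument is correct and is precisely the route the paper indicates: the paper omits the proof, stating only that it follows Danchin's method \cite{Danchin01}, and your Littlewood--Paley energy estimate with the transport commutator bound, the smoothing of $\partial_x p\,\ast$, the Moser estimates for the cubic/quartic terms controlled via the conserved $H^1$ norm, and the Gronwall--continuation step is exactly that adaptation. No gaps.
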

\begin{remark}\label{rmk-blowup-1}
The blow-up criterion \eqref{blowup-criterion-1} implies that
the lifespan $T^{\ast}_{u_0}$ does not depend on the regularity index $s$ of the initial data $u_0$.
\end{remark}

Now we return to the original R-CH \eqref{R-CH-1}, and let
\begin{equation*}
\|u\|^2_{X^{s+1}_{\mu}} = \|u\|^2_{H^s} + \mu \beta \|\partial_x u\|^2_{H^s}.
\end{equation*}
For some $\mu_0 > 0$ and $M > 0$, we define the Camassa-Holm regime $\mathcal{P}_{\mu_0, M} := \{(\varepsilon, \mu): <\mu \leq \mu_0, 0<\varepsilon \leq M \sqrt{\mu}\}$. Then, we have the following corollary.
\begin{cor}(\cite{CL09})
Let $u_0 \in H^{s+1}(\mathbb{R})$, $\mu_0 > 0$ and $M > 0$, $s > \frac{3}{2}$. Then, there exist $T > 0$ and a unique family of solutions $\left(u_{\varepsilon,\mu}\right)|_{(\varepsilon,\mu) \in \mathcal{P}_{\mu_0, M}}$  in $C\left(\left[0,\frac{T}{\varepsilon}\right];X^{s+1}(\mathbb{R})\right) \cap C^1\left(\left[0,\frac{T}{\varepsilon};X^s(\mathbb{R})\right]\right)$ to the Cauchy problem
\begin{equation*}
\begin{cases}
&\partial_t u-\beta\mu \partial_t u_{xx} + c u_x + 3\alpha\varepsilon uu_x - \beta_0\mu u_{xxx} + \omega_1 \varepsilon^2u^2u_x + \omega_2 \varepsilon^3u^3u_x   \\
&\qquad\qquad \qquad\qquad \qquad\qquad \qquad\qquad \qquad= \alpha\beta\varepsilon\mu( 2u_{x}u_{xx}+uu_{xxx}),\\
&u|_{t = 0} = u_0.
\end{cases}
\end{equation*}
\end{cor}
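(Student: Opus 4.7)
The plan is to combine a scaling reduction to Theorem \ref{local} with an $\varepsilon$-scaled energy estimate in the $\mu$-weighted space $X^{s+1}_\mu$, following the methodology of \cite{CL09}. The argument splits naturally into (i) local well-posedness in $X^{s+1}_\mu$ and (ii) extension of the lifespan to the Camassa--Holm time scale $T/\varepsilon$.

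For (i), I would use the scaling $u_{\varepsilon,\mu}(t,x) = \alpha\varepsilon\, u(\sqrt{\beta\mu}\,t,\sqrt{\beta\mu}\,x)$ introduced earlier in the section: if $u$ solves the original R-CH Cauchy problem with datum $u_0 \in H^{s+1}$, then the rescaled field $v$ solves \eqref{rCH-Cauchy} with datum $v_0(y) = \alpha\varepsilon u_0(\sqrt{\beta\mu}\,y)$. A direct change of variables combined with the regime constraints $\varepsilon \leq M\sqrt{\mu}$ and $\mu \leq \mu_0$ shows that $\|v_0\|_{H^{s+1}} \leq C(\mu_0,M)\|u_0\|_{X^{s+1}_\mu}$, and Theorem \ref{local} then delivers a short-time solution $v$, with uniqueness and continuous dependence inherited directly. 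Unscaling produces a local solution $u \in C([0,\tau_\star]; X^{s+1}_\mu) \cap C^1([0,\tau_\star]; X^s_\mu)$ with $\tau_\star>0$ uniform on $\mathcal{P}_{\mu_0,M}$.

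For (ii), I would work directly with \eqref{R-CH-1} and compute $\tfrac{d}{dt}\|u(t)\|_{X^{s+1}_\mu}^2$ by applying $\Lambda^s := (1-\partial_x^2)^{s/2}$ and pairing with $(1-\beta\mu\partial_x^2)\Lambda^s u$ in $L^2$. The skew-adjoint linear part $cu_x - \beta_0\mu u_{xxx}$ drops out, and the transport term $3\alpha\varepsilon uu_x$ must be combined by integration by parts with the mixed nonlinearity $\alpha\beta\varepsilon\mu(2u_xu_{xx} + uu_{xxx})$ so that the would-be top-order $\mu$-singular contributions cancel, leaving only a Kato--Ponce commutator bounded by $C\varepsilon \|\partial_x u\|_{L^\infty}\|u\|_{X^{s+1}_\mu}^2$. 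Moser product estimates on the higher-power local terms $\omega_1\varepsilon^2 u^2 u_x$ and $\omega_2\varepsilon^3 u^3 u_x$ contribute only $O(\varepsilon^2)$ and $O(\varepsilon^3)$, yielding
\begin{equation*}
\frac{d}{dt}\|u(t)\|_{X^{s+1}_\mu}^2 \leq C\varepsilon\, P\bigl(\|u(t)\|_{X^{s+1}_\mu}\bigr),
\end{equation*}
with $P$ polynomial and $C$ uniform on $\mathcal{P}_{\mu_0,M}$. A continuity argument together with the blow-up criterion \eqref{blowup-criterion-1} (pulled back to the original variables) then produces $u_{\varepsilon,\mu}$ on $[0,T/\varepsilon]$ for some $T = T(\|u_0\|_{X^{s+1}_\mu},\mu_0,M)>0$.

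The main obstacle is the $\mu$-uniform cancellation inside the $X^{s+1}_\mu$ energy identity: a naive $L^2$ bound on $\alpha\beta\varepsilon\mu\,\Lambda^s(uu_{xxx})$ would cost an uncompensated factor $\sqrt{\mu}$ that sits outside the norm at the wrong scale and would destroy uniformity across $\mathcal{P}_{\mu_0,M}$ as $\mu \to 0$. The remedy, exactly as in \cite{CL09}, is the symmetrization induced by the operator $1-\beta\mu\partial_x^2$ pairing $uu_x$ with $2u_xu_{xx}+uu_{xxx}$; once this cancellation is arranged, the cubic and quartic local terms are strict perturbations and the remaining estimates are routine.
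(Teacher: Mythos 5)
Your proposal is essentially correct and follows the route the paper itself relies on: the paper states this corollary without proof, attributing it to \cite{CL09} and to the scaling remark after Theorem \ref{local}, and your two-step scheme (scaling reduction for local existence, then an $\varepsilon$-scaled energy estimate in $X^{s+1}_{\mu}$ exploiting the cancellation that the operator $1-\beta\mu\partial_x^2$ induces between $3\alpha\varepsilon uu_x$ and $\alpha\beta\varepsilon\mu(2u_xu_{xx}+uu_{xxx})$) is precisely the Constantin--Lannes argument being invoked. One small imprecision: unscaling a solution of \eqref{rCH-Cauchy} defined on a time interval of length $T_v$ yields a solution of \eqref{R-CH-1} only on an interval of length $\sqrt{\beta\mu}\,T_v$, so the local time $\tau_\star$ you obtain in step (i) is \emph{not} uniform over $\mathcal{P}_{\mu_0,M}$; this is harmless, since mere local existence plus your step (ii) a priori bound and the blow-up criterion \eqref{blowup-criterion-1} already yield the lifespan $T/\varepsilon$, but the uniformity claim should be dropped from step (i) rather than asserted.
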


\renewcommand{\theequation}{\thesection.\arabic{equation}}
\setcounter{equation}{0}

\section{Wake-breaking phenomena} \label{breaking}

Using the energy estimates,  we can further obtain the following wave breaking criterion  to the R-CH equation.

\begin{theorem}[Wave breaking criterion]\label{thm-wavebreak-crt}
Let $u_0 \in H^s(\mathbb{R})$ with $s > \frac{3}{2}$, and $T^{\ast}_{u_0} >0$ be the
maximal existence time of the solution $u$ to the system
\eqref{rCH-Cauchy}  with initial data $u_0$ as in Theorem \ref{local}. Then the corresponding solution blows up in finite time if and only if
\begin{equation}\label{wavw-breaking-condition}
\liminf_{t \uparrow T^{\ast}_{u_0}, x \in \mathbb{R}} u_x(t, x) = - \infty.
\end{equation}
\end{theorem}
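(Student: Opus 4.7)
The two implications have quite different character. The direction $\liminf_{t \uparrow T^*_{u_0}, x \in \mathbb{R}} u_x(t,x) = -\infty \Longrightarrow T^*_{u_0} < \infty$ is comparatively easy: since by Theorem \ref{local} the solution lies in $C([0,T]; H^s)$ for every $T<T^*_{u_0}$, Sobolev embedding $H^{s-1}(\mathbb{R}) \hookrightarrow L^\infty(\mathbb{R})$ (as $s>3/2$) keeps $\|u_x(t)\|_{L^\infty}$ finite on $[0,T]$, so a finite limit time $T^*_{u_0}$ at which $u_x$ plunges to $-\infty$ must be a blow-up time. I would dispatch this first and then concentrate on the converse: assuming $T^*_{u_0} < \infty$, together with the contradiction hypothesis that $u_x(t,x) \geq -K$ for some constant $K$ and all $(t,x) \in [0,T^*_{u_0}) \times \mathbb{R}$, I will produce a uniform upper bound on $u_x$ and thereby violate the blow-up criterion \eqref{blowup-criterion-1}.

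The core step is a Riccati-type estimate for $M(t) := \sup_{x \in \mathbb{R}} u_x(t,x)$. Starting from the nonlocal form \eqref{weak-RCH} and differentiating once in $x$, together with the identity $p \ast \partial_x^2 G = p \ast G - G$ (which comes from $(1-\partial_x^2)p = \delta$), I obtain
\begin{equation*}
u_{tx} + \left(u + \tfrac{\beta_0}{\beta}\right) u_{xx} + u_x^2 + p \ast G - G = 0,
\end{equation*}
where $G = (c - \tfrac{\beta_0}{\beta}) u + u^2 + \tfrac{1}{2} u_x^2 + \tfrac{\omega_1}{3\alpha^2} u^3 + \tfrac{\omega_2}{4\alpha^3} u^4$. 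By the Rademacher-type lemma of Constantin--Escher, $M(t)$ is locally Lipschitz, and at almost every $t$ there exists $\xi(t) \in \mathbb{R}$ with $u_x(t,\xi(t)) = M(t)$ and $u_{xx}(t,\xi(t)) = 0$, so that $M'(t) = u_{tx}(t,\xi(t))$. Evaluating the above identity at $\xi(t)$ and noting that the $\tfrac12 u_x^2$ piece of $G$ partially absorbs the $-u_x^2$ term yields
\begin{equation*}
M'(t) = -\tfrac{1}{2} M(t)^2 + R(t), \quad R(t) := \left[(c - \tfrac{\beta_0}{\beta}) u + u^2 + \tfrac{\omega_1}{3\alpha^2} u^3 + \tfrac{\omega_2}{4\alpha^3} u^4 - p \ast G\right](t, \xi(t)).
\end{equation*}

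Next, I would show $R$ is uniformly bounded on $[0,T^*_{u_0})$. Conservation of $E(u) = \tfrac12 \int (u^2 + u_x^2)\, dx$ (Theorem \ref{local}) keeps $\|u(t)\|_{H^1} = \|u_0\|_{H^1}$ for all $t<T^*_{u_0}$, so Sobolev embedding $H^1(\mathbb{R}) \hookrightarrow L^\infty(\mathbb{R})$ controls $\|u(t)\|_{L^\infty}$ by a constant depending only on $E(u_0)$; this bounds all the polynomial pointwise terms in $R$. For the convolution I will estimate term by term: $\|p \ast u\|_{L^\infty} \leq \|p\|_{L^2}\|u\|_{L^2}$, $\|p \ast u^k\|_{L^\infty} \leq \tfrac{1}{2}\|u^k\|_{L^1} \leq \tfrac{1}{2}\|u\|_{L^\infty}^{k-2}\|u\|_{L^2}^2$ for $k=2,3,4$, and $\|p \ast u_x^2\|_{L^\infty} \leq \tfrac{1}{2}\|u_x\|_{L^2}^2$, each controlled by a constant $C_0 = C_0(E(u_0), c, \alpha, \beta, \beta_0, \omega_1, \omega_2)$. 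Hence $|R(t)| \leq C_0$.

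The differential inequality $M'(t) \leq -\tfrac12 M(t)^2 + C_0$ then forces $M(t) \leq \max\{M(0), \sqrt{2C_0}\}$ for all $t \in [0, T^*_{u_0})$. Combined with the standing assumption $u_x \geq -K$, this gives $\|u_x(t)\|_{L^\infty} \leq \max\{M(0), \sqrt{2C_0}, K\}$, so $\int_0^{T^*_{u_0}} \|u_x(\tau)\|_{L^\infty}\, d\tau < \infty$ as $T^*_{u_0} < \infty$, contradicting \eqref{blowup-criterion-1}. The principal obstacle I anticipate is bookkeeping rather than conceptual: controlling the cubic and quartic pieces of $p \ast G$ uniformly despite the higher-power nonlinearities specific to the R-CH model, and rigorously justifying $M'(t) = u_{tx}(t, \xi(t))$ almost everywhere; both are dispatched by the $E$-conservation bound and the Constantin--Escher lemma as described above.
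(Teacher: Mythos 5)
Your argument is correct, but it follows a genuinely different route from the paper's. The paper proves the hard direction ($T^{\ast}_{u_0}<\infty$ plus $u_x\geq -M$ leads to a contradiction) by higher-order energy estimates: after reducing to $s=3$ by density and the $s$-independence of the lifespan (Remark \ref{rmk-blowup-1}), it writes the $\frac{d}{dt}\|u_x\|_{H^1}^2$ and $\frac{d}{dt}\|u_{xx}\|_{H^1}^2$ identities, uses the one-sided bound to control the sign-indefinite terms $-\frac{3}{2}\int u_x(u_x^2+u_{xx}^2)\,dx$, closes with Gronwall to get a uniform $H^3$ bound, and contradicts the maximality of $T^{\ast}_{u_0}$ by continuation. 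You instead run the Constantin--Escher device: a pointwise Riccati inequality $M'\leq -\frac12 M^2+C_0$ for $M(t)=\sup_x u_x(t,\cdot)$, with $C_0$ controlled by the conserved $E(u_0)$ through Young's inequality on the convolution terms, yielding an unconditional upper bound $\sup_x u_x\leq\max\{M(0),\sqrt{2C_0}\}$ on the whole lifespan; combined with the assumed lower bound this makes $\int_0^{T^{\ast}_{u_0}}\|u_x\|_{L^\infty}\,d\tau$ finite and contradicts the blow-up criterion \eqref{blowup-criterion-1}. Your cancellation of $-u_x^2$ against the $\frac12 u_x^2$ inside $G$ at the critical point is correct, and the convolution bounds go through despite the cubic and quartic terms since they only need $\|u\|_{L^\infty}\lesssim\|u\|_{H^1}=\|u_0\|_{H^1}$. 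Two dependencies to make explicit: your route leans on Theorem 3.2, which the paper states without proof (the paper's own argument is self-contained modulo local well-posedness and instead \emph{reproves} continuation directly); and the identity $u_{xx}(t,\xi(t))=0$ at the maximizer requires enough regularity ($s$ large, then density), i.e. the same reduction the paper performs at the outset --- you gesture at this via the Rademacher-type lemma but should state the reduction to $s\geq 3$ explicitly. What your approach buys is a quantitative a priori upper bound on $\sup_x u_x$ depending only on $u_0$, which is of independent interest and is close in spirit to the machinery the paper deploys later for Theorem \ref{Blow-up}; what the paper's approach buys is avoiding any unproved auxiliary criterion.
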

\begin{proof}
Applying Theorem \ref{local}, Remark \ref{rmk-blowup-1}, and a simple
density argument, we only need to show that Theorem \ref{thm-wavebreak-crt}
holds for some $s \geq 3.$ Here we assume $s = 3$ to prove the
above theorem.

Multiplying the first equation in \eqref{rCH-Cauchy} by $u$ and
integrating by parts, we get
\begin{equation}\label{4.1-1}
\frac{1}{2}\frac{d}{dt}\|u\|_{H^1}^2=0,
\end{equation}
and then for any $t\in (0, T^{\ast}_{u_0})$
\begin{equation}\label{4.1-1c}
\|u(t)\|_{H^1}=\|u_0\|_{H^1}.
\end{equation}
On the other hand, multiplying the first equation in
\eqref{rCH-Cauchy}  by $u_{xx}$ and integrating by parts again, we
obtain
\begin{equation}\label{4.4-1b}
\begin{split}
\frac{1}{2}\frac{d}{dt}\|u_x\|_{H^1}^2&=-\frac{3}{2}\int_{\mathbb{R}}u_{x}(u_{x}^2+u_{xx}^2)
dx-\int_{\mathbb{R}} (\frac{\omega_1}{\alpha^2}u^2u_x+\frac{\omega_2}{\alpha^3}u^3u_x)u_{xx} \,dx\\
&=-\frac{3}{2}\int_{\mathbb{R}}u_{x}(u_{x}^2+u_{xx}^2)
dx+\int_{\mathbb{R}} \big|\frac{\omega_1}{2\alpha^2}u^2+\frac{\omega_2}{2\alpha^3}u^3\big|(u_x^2+u_{xx}^2) \,dx.
\end{split}
\end{equation}
Assume that $T^{\ast}_{u_0} < +\infty$ and  there exists $M > 0$ such that
\begin{equation}\label{4.4-1-1a}
u_x(t, x) \geq -M, \quad \forall \, (t, \, x) \in [0, T^{\ast}_{u_0}) \times
\mathbb{R}.
\end{equation}
 It then follows from \eqref{4.1-1}, \eqref{4.1-1c}, and \eqref{4.4-1b} that
\begin{equation}\label{4.9-1}
\begin{split}
\frac{d}{dt}\int_{\mathbb{R}}(u^2+2u_{x}^2+u_{xx}^2)
dx &\leq (\frac{3}{2}M+\frac{|\omega_1|}{2\alpha^2}\|u\|_{L^{\infty}}^2+\frac{|\omega_2|}{2|\alpha|^3}\|u\|_{L^{\infty}}^3) \int_{\mathbb{R}}(u_{x}^2+u_{xx}^2)\,dx\\
& \leq C(1+M+\|u\|_{H^1}^3)\int_{\mathbb{R}}(u_{x}^2+u_{xx}^2)\,dx,
\end{split}
\end{equation}
where we used the Sobolev embedding theorem $H^{s}(\mathbb{R})
\hookrightarrow L^{\infty}(\mathbb{R})$ (with $s>\frac{1}{2}$) in the last inequaity.
Applying Gronwall's inequality to \eqref{4.9-1} yields for every
$t \in [0, \, T^{\ast}_{u_0})$
\begin{equation}\label{4.10-1}
\|u(t)\|_{H^{2}}^2 \leq
2\|u_0\|_{H^{2}(\mathbb{R})}^2 e^{C t(1+M+\|u_0\|_{H^{1}}^3)}
\leq
2\|u_0\|_{H^{2}(\mathbb{R})}^2 e^{C T^{\ast}_{u_0}(1+M+\|u_0\|_{H^{1}}^3)} .
\end{equation}
Differentiating the first equation in \eqref{rCH-Cauchy} with
respect to $x$, and  multiplying the result equation by $u_{xxx},$
then integrating by parts, we get
\begin{equation*}\label{4.11-1}
\begin{split}
& \frac{1}{2}\frac{d}{dt}\int_{\mathbb{R}}(u_{xx}^2+u_{xxx}^2)\,
dx\\
&=-\frac{15}{2}\int_{\mathbb{R}}u_{x}u_{xx}^2
dx-\frac{5}{2}\int_{\mathbb{R}}u_{x}u_{xxx}^2
dx-\int_{\mathbb{R}} (\frac{\omega_1}{\alpha^2}u^2u_x+\frac{\omega_2}{\alpha^3}u^3u_x)_{x}u_{xxx} \,dx\\
& \leq C(1+M+\|u\|_{L^{\infty}}^3) \int_{\mathbb{R}}(u_{xx}^2+u_{xxx}^2) \,dx+C(\|u\|_{L^{\infty}}^2+\|u\|_{L^{\infty}}^4)\|u_x\|_{L^4}^4,
\end{split}
\end{equation*}
where we have used the assumption \eqref{4.4-1-1a}, which follows from the Sobolev embedding theorem and the interpolation inequality
$\|f\|_{L^4(\mathbb{R})} \leq C \|f\|_{L^2(\mathbb{R})}^{\frac{3}{4}} \|f_x\|_{L^{2}(\mathbb{R})}^{\frac{1}{4}} $
 that
\begin{equation*}\label{4.11-1a}
\begin{split}
&\frac{d}{dt}\int_{\mathbb{R}}(u_{xx}^2+u_{xxx}^2)\,
dx\leq C(1+M+\|u_0\|_{H^{1}}^3) \int_{\mathbb{R}}(u_{xx}^2+u_{xxx}^2) \,dx\\
&\qquad\qquad\qquad \qquad\qquad+C\|u_0\|_{H^{1}}^5(1+\|u_0\|_{H^{1}}^2)\|u_{xx}\|_{L^2}\\
&\leq  C(1+M+\|u_0\|_{H^{1}}^{14}) \int_{\mathbb{R}}(u_{xx}^2+u_{xxx}^2) \,dx.
\end{split}
\end{equation*}
Hence, Gronwall's
inequality applied implies that for every
$t \in [0, \, T^{\ast}_{u_0})$
\begin{equation*}\label{4.14-1}
\begin{split}
 \int_{\mathbb{R}}(u_{xx}^2+u_{xxx}^2)
dx \leq e^{C(1+M+\|u_0\|_{H^{1}}^{14})  T^{\ast}_{u_0}}\int_{\mathbb{R}}(u_{0xx}^2+u_{0xxx}^2)
dx,
\end{split}
\end{equation*}
which, together with \eqref{4.10-1}, yields that for every $t
\in [0, \, T^{\ast}_{u_0})$,
\begin{equation*}
\|u(t)\|_{H^{3}(\mathbb{R})}^2
\leq
3\|u_0\|_{H^{3}(\mathbb{R})}^2 e^{C (1+M+\|u_0\|_{H^{1}}^{14})  T^{\ast}_{u_0}}.
\end{equation*}
This contradicts the assumption the maximal existence time
$T^{\ast}_{u_0}<+\infty.$

Conversely, the Sobolev embedding theorem $H^{s}(\mathbb{R})
\hookrightarrow L^{\infty}(\mathbb{R})$ (with $s>\frac{1}{2}$)
implies that if \eqref{wavw-breaking-condition} holds, the corresponding solution blows
up in finite time, which completes the proof of Theorem
\ref{thm-wavebreak-crt}.
\end{proof}

Recall the R-CH equation \eqref{weak-RCH}, namely,
\begin{equation*}
u_t + u u_x+\frac{\beta_0}{\beta}u_x + p_x \ast \left (\left(c-\frac{\beta_0}{\beta}\right)u + u^2+\frac{1}{2}u_x^2+\frac{\omega_1}{3\alpha^2}u^3 + \frac{\omega_2}{4\alpha^3}u^4 \right )=0,
\end{equation*}
where $p = \frac{1}{2}e^{-|x|}$. The wave breaking phenomena  could be now illustrated by choosing certain the initial data.

\begin{theorem}[Wave breaking data]\label{Blow-up}
Suppose $u_0 \in H^s$ with $s > 3/2$. Let $T > 0$ be the maximal time of existence of the corresponding solution $u(t, x)$ to \eqref{weak-RCH} with the initial data $u_0$. Assume these is  $x_0 \in \mathbb{R}$ such that
\begin{equation*}
u_{0,x}(x_0) < - \left | u_0(x_0) - \frac{1}{2} \left ( \frac{\beta_0}{\beta} - c \right ) \right  |  -\sqrt{2}C_0,
\end{equation*}
where $ C_0 > 0 $ is defined by
\begin{equation} \label{Blow-up data}
C_0^2 = \frac{|\omega_1|}{2 \alpha^2} E_0^{\frac{3}{2}} + \frac{ |\omega_2|}{2 \alpha^3} E_0^2,
\end{equation}
and
$$
E_0 = \frac{1}{2} \int_{\mathbb{R}} \left ( u_0^2 + (\partial_xu_0)^2 \right )  dx.
$$
Then the solution $u(t, x)$ breaks down  at the time
\begin{equation*}
T \leq \frac{2}{\sqrt{u_{0,x}^2(x_0) -  \left (u_0(x_0) - \frac{1}{2}  \left ( \frac{\beta_0}{\beta} - c \right ) \right  )^2 }-\sqrt{2}C_0}.
\end{equation*}
\end{theorem}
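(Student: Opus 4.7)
The plan is to follow a Constantin--Escher-style characteristic argument adapted to the cubic and quartic nonlocal terms of R-CH. Let $q(t)$ solve $\dot q = u(t,q) + \beta_0/\beta$ with $q(0) = x_0$, and set $U(t) = u(t, q(t))$, $P(t) = u_x(t, q(t))$, $V(t) = U(t) - K$, where $K := \tfrac{1}{2}(\beta_0/\beta - c)$. Starting from the nonlocal form \eqref{weak-RCH} and its $x$-derivative (using the distributional identity $p_{xx}*f = p*f - f$), one derives the coupled ODE along the characteristic:
\begin{align*}
\dot U = -(p_x*f)(t,q), \qquad \dot P = -\tfrac{1}{2}P^2 + V^2 - K^2 + h(t,q) - (p*f)(t,q),
\end{align*}
with $h(u) := \frac{\omega_1}{3\alpha^2}u^3 + \frac{\omega_2}{4\alpha^3}u^4$ and $f = (c-\beta_0/\beta)u + u^2 + \tfrac{1}{2}u_x^2 + h$.

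The central analytic ingredient is a pointwise convolution bound. Rewriting $f = (u-K)^2 + \tfrac{1}{2}u_x^2 - K^2 + h$, the classical Constantin identity generalizes to
\begin{equation*}
p*\bigl((u-K)^2 + \tfrac{1}{2}u_x^2\bigr)(x) \geq \tfrac{1}{2}(u(x) - K)^2,
\end{equation*}
which follows from the same integration by parts of $\tfrac{d}{dy}\bigl(\tfrac{1}{2}e^{y-x}(u-K)^2\bigr)$ as in the classical CH case, the key point being that boundedness of $u - K$ (rather than its decay at infinity) suffices to kill the boundary terms. Together with the conservation $E(u) \equiv E_0$ yielding $\|u(t,\cdot)\|_\infty \leq \sqrt{E_0}$, the cubic and quartic contributions $h$ and $p*h$ can be estimated uniformly by $C_0^2$ as defined in \eqref{Blow-up data}, and one obtains the Riccati-type inequality
\begin{equation*}
\dot P \leq -\tfrac{1}{2}P^2 + \tfrac{1}{2}V^2 + C_0^2 = -\tfrac{1}{2}\bigl(P^2 - V^2 - 2C_0^2\bigr).
\end{equation*}

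The hypothesis $u_{0,x}(x_0) < -|u_0(x_0) - K| - \sqrt{2}C_0$ translates into $P(0) < 0$ and $P(0)^2 - V(0)^2 > 2\sqrt{2}|V(0)|C_0 + 2C_0^2 > 2C_0^2$, so the quantity $P^2 - V^2 - 2C_0^2$ is initially positive. Since $|V(t)| \leq \sqrt{E_0} + |K|$ stays bounded by energy conservation, a continuity/comparison argument propagates $P(t) < -\sqrt{V(t)^2 + 2C_0^2}$ as long as the solution exists, so that $P$ is strictly decreasing along $q$. Separating variables in the factored form $\dot P \leq -\tfrac{1}{2}(P - \sqrt{V^2 + 2C_0^2})(P + \sqrt{V^2 + 2C_0^2})$ and using the elementary estimate $\log(1+x) \leq x$ then produces the explicit blow-up time
\begin{equation*}
T \leq \frac{2}{\sqrt{P(0)^2 - V(0)^2} - \sqrt{2}C_0},
\end{equation*}
after which $P(t) \to -\infty$ and Theorem \ref{thm-wavebreak-crt} forces wave breaking.

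The main obstacle is producing exactly the constant $C_0^2$ (rather than some larger multiple) on the right-hand side of the Riccati inequality. Because $h$ is not sign-definite, the naive bound $|h - p*h| \leq 2\|h\|_\infty$ falls short by a factor, and extracting the sharp $C_0^2$ requires a more delicate analysis of the nonlocal remainder -- most naturally via the decoupling into the characteristic quantities $M = U - P + c_1$, $N = U + P + c_2$ foreshadowed in the introduction, whose interaction $MN$ absorbs the higher-order nonlocal correction through the conserved quantities $I$, $E$, and $F$.
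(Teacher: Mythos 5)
Your setup (characteristics after the Galilean shift by $\beta_0/\beta$, the shifted Constantin-type convolution inequality for $(u-K)^2+\tfrac12 u_x^2$, the $L^\infty$/energy bounds on the cubic--quartic remainder, and the final integration giving $T\le 2/(\sqrt{P(0)^2-V(0)^2}-\sqrt2 C_0)$) all match the paper. But your central step has a genuine gap: the single Riccati inequality $\dot P\le -\tfrac12\bigl(P^2-V^2-2C_0^2\bigr)$ does \emph{not} propagate the condition $P(t)<-\sqrt{V(t)^2+2C_0^2}$. The quantity $V(t)=u(t,q(t))-K$ is neither monotone nor small; it evolves by $\dot V=-(p_x*f)(t,q)$, and the only a priori control is the crude bound $|V|\le\sqrt{E_0}+|K|$, which the hypothesis on $u_{0,x}(x_0)$ does not dominate. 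Writing $W=P^2-V^2-2C_0^2$, one gets $\dot W\ge |P|W-2|V||\dot V|$, and when $|P|$ is still moderate the uncontrolled $V\dot V$ term can drive $W$ to zero, after which $P$ need not decrease at all. So the "continuity/comparison argument" you invoke is precisely the missing ingredient, not a routine step.

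The paper closes this by never working with $P$ alone: it sets $M=u-\tfrac k2-u_x$ and $N=u-\tfrac k2+u_x$ (so $-MN=P^2-V^2$) and uses the \emph{one-sided} estimates $p_\pm*\bigl(u^2-ku+\tfrac12u_x^2\bigr)\ge\tfrac14\bigl(u^2-ku-\tfrac{k^2}4\bigr)$ separately --- $p_+$ in the $M$-equation, $p_-$ in the $N$-equation --- to obtain $\dot M\ge-\tfrac12MN-C_0^2$ and $\dot N\le\tfrac12MN+C_0^2$. The hypothesis gives $M(0)>0$, $N(0)<0$ and $\tfrac12M(0)N(0)+C_0^2<0$, so both right-hand sides are sign-definite at $t=0$ and remain so: $M$ increases, $N$ decreases, $-MN$ grows, and $h=\sqrt{-MN}$ satisfies $\dot h\ge\tfrac12(h-\sqrt2C_0)^2$, which blows up in the stated time. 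Note also that your self-diagnosis is slightly off: the constant $C_0^2$ itself comes from elementary $L^\infty$/$L^2$ bounds on $\tfrac{\omega_1}{3\alpha^2}u^3+\tfrac{\omega_2}{4\alpha^3}u^4$ and $p_\pm*(\cdot)$ using $\|u\|_{L^\infty}^2\le E_0$ and is not the delicate part; the $M$--$N$ decomposition is needed for the monotonicity propagation, not for sharpening the constant. To repair your proof you must actually carry out that decomposition rather than defer to it.
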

\begin{remark}
In the case of  the rotation frequency $ \Omega = 0, $ or the wave speed $ c = 1,  $ the corresponding constant $ C_0 $ in \eqref {Blow-up data} must be zero, because the  parameters $ \omega_1 $ and $ \omega _2 $ vanish. The assumption on the wave breaking is then back to the case of the classical CH equation.
\end{remark}

\begin{proof}  Applying the translation $ u(t, x)  \mapsto u(t, x - \frac{\beta_0}{\beta} t) $ to equation \eqref{weak-RCH} yields the equation in the form,
\begin{equation} \label{weak-RCH-2}
u_t + u u_x + p_x \ast \left  (\left(c-\frac{\beta_0}{\beta}\right)u + u^2+\frac{1}{2}u_x^2+\frac{\omega_1}{3\alpha^2}u^3 + \frac{\omega_2}{4\alpha^3}u^4 \right  )=0.
\end{equation}
Taking the derivative  $\partial_x$ to \eqref{weak-RCH-2}, we have
\begin{equation}\label{u-xt}
\begin{split}
u_{xt} + u u_{xx} = & - \frac{1}{2}u^2_x + u^2 + \left(c-\frac{\beta_0}{\beta}\right)u+\frac{\omega_1}{3\alpha^2}u^3 + \frac{\omega_2}{4\alpha^3}u^4  \\
&- p  \ast \left (\left(c-\frac{\beta_0}{\beta}\right)u + u^2+\frac{1}{2}u_x^2+\frac{\omega_1}{3\alpha^2}u^3 + \frac{\omega_2}{4\alpha^3}u^4 \right ).
\end{split}
\end{equation}
We introduce the associated Lagrangian scales of \eqref{weak-RCH-2} as
\begin{equation*}
\begin{cases}
\frac{\partial q}{\partial t} = u(t, q), & 0 < t < T,\\
q(0, x) = x, & x \in \mathbb{R},
\end{cases}
\end{equation*}
where $u \in C^1([0,T), H^{s-1})$ is the solution to equation \eqref{weak-RCH-2} with initial data $u_0 \in H^s$, $s> 3/2$. Along with the trajectory of $q(t, x_))$, \eqref{weak-RCH-2} and \eqref{u-xt} become
\begin{gather*}
\frac{\partial u(t,q)}{\partial t} = - p_x \ast \left ( \left(c-\frac{\beta_0}{\beta}\right)u + u^2+\frac{1}{2}u_x^2+\frac{\omega_1}{3\alpha^2}u^3  +\frac{\omega_2}{4\alpha^3}u^4 \right ),\\
\begin{split}
\frac{\partial u_x(t,q)}{\partial t} = - \frac{1}{2}u^2_x + u^2 + & \left(c-\frac{\beta_0}{\beta}\right)u + \frac{\omega_1}{3\alpha^2}u^3  +\frac{\omega_2}{4\alpha^3}u^4 \\
& - p \ast \left (  \left(c-\frac{\beta_0}{\beta}\right)u + u^2+\frac{1}{2}u_x^2+\frac{\omega_1}{3\alpha^2}u^3  +\frac{\omega_2}{4\alpha^3}u^4  \right ).
\end{split}
\end{gather*}
Denote now at $ (t, q(t, x_0)),$
\begin{equation*}
M(t) = u(t,q) - \frac{k}{2} - u_x(t, q) \quad \text{and} \quad N(t) = u(t, q) - \frac{k}{2} +  u_x(t, q),
\end{equation*}
where $ k = \frac{\beta_0}{\beta} - c. $ Recall the two convolution operators $p_+$, $p_-$ as
\begin{equation*}
\begin{split}
& p_+ \ast f (x) = {e^{-x} \over 2} \int^x_{-\infty} e^y f(y) dy,\\
& p_- \ast f(x) = {e^{x}\over 2} \int^\infty_{x} e^{-y} f(y) dy
\end{split}
\end{equation*}
and the relation
\begin{equation*}
p = p_+ + p_-, \qquad p_x = p_- - p_+.
\end{equation*}
Applying \cite[Lemma 3.1 (1)]{BrCo2} with $m = - k^2/4$ and $K = 1$ we have the following convolution estimates
\begin{equation*}
p_\pm \ast \left ( u^2 - ku + {1\over 2}u^2_x \right )  \geq {1\over 4} \left ( u^2 - ku - {k^2 \over 4} \right ).
\end{equation*}
It then follows that  at $(t, q(t, x_0))$,
\begin{equation*}
\begin{split}
\frac{\partial M}{\partial t} =&\, \frac{1}{2} u^2_x - u^2 + k u - \frac{\omega_1}{3\alpha^2}u^3  - \frac{\omega_2}{4\alpha^3}u^4 \\
&\, + 2 p_{+} \ast \left (- ku + u^2+\frac{1}{2}u_x^2+\frac{\omega_1}{3\alpha^2}u^3 + \frac{\omega_2}{4\alpha^3}u^4 \right )\\
\geq  & \,  \frac{1}{2} \left ( u_x^2 - \left (u - \frac{k}{2} \right )^2 \right )   - \frac{\omega_1}{3\alpha^2}u^3 - \frac{\omega_2}{4\alpha^3}u^4 + 2 p_{+} \ast \left(\frac{\omega_1}{3\alpha^2}u^3 + \frac{\omega_2}{4\alpha^3}u^4 \right ) \\
= &\, -\frac{1}{2} MN    - \frac{\omega_1}{3\alpha^2}u^3  - \frac{\omega_2}{4\alpha^3}u^4  + 2 p_{+} \ast \left ( \frac{\omega_1}{3\alpha^2}u^3 + \frac{\omega_2}{4\alpha^3}u^4 \right )
\end{split}
\end{equation*}
\begin{equation*}
\begin{split}
\frac{\partial N}{\partial t} = &\, - \frac{1}{2} u^2_x + u^2 - k u + \frac{\omega_1}{3\alpha^2}u^3 + \frac{\omega_2}{4\alpha^3}u^4 \\
&\, - 2 p_{-} \ast \left ( - k u + u^2+\frac{1}{2}u_x^2+\frac{\omega_1}{3\alpha^2}u^3 +\frac{\omega_2}{4\alpha^3}u^4 \right )\\
\leq &\, -\frac{1}{2}\left ( u_x^2 - \left ( u - \frac{k}{2} \right )^2 \right )  + \frac{\omega_1}{3\alpha^2}u^3 +\frac{\omega_2}{4\alpha^3}u^4
- 2 p_{-} * \left ( \frac{\omega_1}{3\alpha^2}u^3 + \frac{\omega_2}{4\alpha^3}u^4 \right ) \\
=&\, \frac{1}{2}MN    + \frac{\omega_1}{3\alpha^2}u^3 + \frac{\omega_2}{4\alpha^3}u^4  - 2 p_{-} \ast \left (\frac{\omega_1}{3\alpha^2}u^3 + \frac{\omega_2}{4\alpha^3}u^4 \right )
\end{split}
\end{equation*}
The terms with  $ \omega_1 $ and $ \omega_2 $ in the right sides of the above estimates can be bounded by
\begin{equation*}
\begin{split}
\left| \frac{\omega_1}{3\alpha^2}u^3 \right . & \left .  +  \frac{\omega_2}{4\alpha^3}u^4  \mp 2p_{\pm} \ast \left (\frac{\omega_1}{3\alpha^2}u^3 + \frac{\omega_2}{4\alpha^3}u^4 \right ) \right|\\
\leq & \, \frac{|\omega_1|}{3\alpha^2} \|u\|_{L^\infty}^3  + \frac{|\omega_2|}{4\alpha^3} \|u\|_{L^\infty}^4 +  \|u\|_{L^\infty} \left(\frac{|\omega_1|}{3\alpha^2} \|u\|^2_{L^2}\right) +  \|u\|^2_{L^\infty} \left(\frac{|\omega_2|}{4\alpha^3} \|u\|^2_{L^2}\right) \\
\leq&\, \frac {|\omega_1| }{ 2 \alpha^2} E_0^{\frac{3}{2}} + \frac{|\omega_2|} {2 \alpha^3} E_0^2  = C^2_0 > 0,
\end{split}
\end{equation*}
where use has been made of the fact that
\begin{equation*}
\| p_\pm \|_{L^\infty} = {1\over 2}, \quad \| p_\pm \|_{L^2} = {1\over 2\sqrt{2}}.
\end{equation*}
In consequence, we have
\begin{equation}\label{AB}
\begin{cases}
\frac{d M}{d t} \geq - \frac{1}{2}MN - C^2_0,\\
\frac{d N}{d t} \leq \frac{1}{2}MN + C^2_0.
\end{cases}
\end{equation}
By the assumptions on $ u_0(x_0) $, it is easy to see that
\begin{equation*}
M(0) = u_0(x_0) - \frac{k}{2}  - u_{0,x}(x_0) > 0, \;
N(0) = u_0(x_0) - \frac{k}{2}  + u_{0,x}(x_0) < 0, \;   \frac{1}{2}M(0)N(0) + C^2_0 < 0. \label{AB-0}
\end{equation*}
By the continuity of $M(t)$ and $N(t)$, it then ensures that
\begin{equation*}
\frac{d M}{d t} > 0,\quad \frac{d N}{d t} < 0,\quad \forall t \in [0, T). \label{AB-1}
\end{equation*}
This in turn implies that
\begin{equation*}
M(t) > M(0) > 0,\quad N(t) < N(0) < 0, \quad \forall t \in [0,T). \label{AB-2}
\end{equation*}
Let $h(t) = \sqrt{-M(t)N(t)}$. It then follows from \eqref{AB} that
\begin{equation*}
\begin{split}
\frac{d h}{d t} =\frac{-M'(t)N(t) -M(t) N'(t)}{2 h}\geq &\,\frac{\left(-\frac{1}{2}MN-C^2_0\right)(-N)-M\left(\frac{1}{2}MN+C^2_0\right)}{2h}\\
=&\,\frac{M-N}{2h}\left(-\frac{1}{2}MN-C^2_0\right).
\end{split}
\end{equation*}
Using the estimate $\frac{M-N}{2h} \geq 1$ and  the fact that $h+\sqrt{2}C_0 > h - \sqrt{2}C_0 > 0$, we  obtain the following differential inequalities
\begin{equation*}
\begin{split}
\frac{d h}{d t} \geq&-\frac{1}{2}MN -C^2_0
=\frac{1}{2}(h-\sqrt{2}C_0)(h+\sqrt{2}C_0)
\geq \frac{1}{2}(h - \sqrt{2}C_0)^2.
\end{split}
\end{equation*}
Solving this inequality gives
\begin{equation*}
t  \leq  \frac{2}{\sqrt{u_{0,x}(x_0)^2-( u_0(x_0) - \frac{k}{2} )^2}-\sqrt{2}C_0} < \infty.
\end{equation*}
This in turn implies there exists $T < \infty$, such that
$$\liminf_{t \uparrow T_{u_0}, x \in \mathbb{R}} \partial_x u(t, x) = - \infty,$$
the desired result as indicated in Theorem \ref{Blow-up}.
\end{proof}
\begin{remark}
Returning to the original scale, our assumption for the blow-up phenomena becomes
\begin{equation*}
\sqrt{\beta \mu}\,u_{0,x}(\sqrt{\beta \mu}x_0) + \left |u_0(\sqrt{\beta \mu}x_0)- \frac {1}{2\alpha \varepsilon} \left ( \frac{\beta_0}{\beta} - c \right ) \right | < -\frac{\sqrt{2}}{\alpha \varepsilon} C_1.
\end{equation*}
Note that when  $\Omega$ increases, $\alpha$ and $\beta$ decrease. It is then observed that  with effect of the Earth rotation, a worse initial data $u_0(x_0)$ are required to make  the breaking wave happen.
On the other hand, in the original scale, we have
\begin{equation*}
T \leq \frac{2}{\alpha \varepsilon \left ( \sqrt{\beta \mu u_{0,x}^2(\sqrt{\beta\mu} x_0)-  \left (u_0(\sqrt{\beta \mu}x_0)- \frac{1}{2\alpha \varepsilon}  ( \frac{\beta_0}{\beta} - c  ) \right  )^2} -\frac{\sqrt{2}}{\alpha \varepsilon} C_1 \right ) }
\end{equation*}
where
\begin{equation*}
C_1^2 =  \frac{ |\omega_1|\alpha \varepsilon^3}{2}E^{\frac{3}{2}} + \frac{|\omega_2|\varepsilon^2 }{2 \alpha} E^2 \quad \mbox{with} \quad
E(u_0) = \frac{1}{\alpha^2 \varepsilon^2}E_0(\alpha \varepsilon u_0(\sqrt{\beta \mu}x_0)).
\end{equation*}

\end{remark}

\renewcommand{\theequation}{\thesection.\arabic{equation}}
\setcounter{equation}{0}


\appendix
\section{Derivations of the asymptotic expansions of $u, \,W, \,p,\,\eta$}

We consider the governing equations \eqref{governing}
\begin{equation}\label{A-Euler-1}
\begin{cases}
- c u_{\xi} + \varepsilon (u_\tau + uu_\xi + Wu_z) + 2\Omega W = - p_\xi \quad &  \text{in}\quad 0 < z < 1 + \varepsilon \eta,\\
\varepsilon\mu \{- c W_\xi + \varepsilon (W_\tau + u W_\xi + WW_z)\} - 2\Omega u = - p_z \quad &  \text{in}\quad 0 < z < 1 + \varepsilon \eta,\\
u_\xi + W_z = 0 \quad & \text{in}\quad 0 < z < 1 + \varepsilon \eta,\\
u_z - \varepsilon\mu W_\xi = 0 \quad & \text{in}\quad 0 < z < 1 + \varepsilon \eta,\\
p = \eta \quad & \text{on}\quad z = 1+ \varepsilon \eta,\\
W = - c \eta_\xi + \varepsilon (\eta_\tau + u \eta_\xi) \quad & \text{on}\quad z = 1+ \varepsilon \eta,\\
W = 0 \quad & \text{on} \quad z = 0.
\end{cases}
\end{equation}
A double asymptotic expansion is introduced to seek a solution of the system \eqref{A-Euler-1},
\begin{equation*}
q \sim \sum_{n=0}^{\infty} \sum_{m=0}^{\infty}\varepsilon^n \mu^m q_{nm}
\end{equation*}
as $\varepsilon \rightarrow 0, \mu \rightarrow 0$, where $q$ will be taken the scale functions $u, \,W, \,p$ and $\eta$, and all the functions $q_{nm}$ satisfiy the far field conditions $q_{nm} \rightarrow 0$ as $|\xi|\rightarrow \infty$ for every $n, \,m=0, 1, 2, 3, ...$.

Substituting the asymptotic expansions of $u, \,W, \,p,\,\eta$ into \eqref{A-Euler-1}, we check all the coefficients of the order $O(\varepsilon^i\mu^j)$ ($i, \, j=0, 1, 2, 3, ...$).

From the order $O(\varepsilon^0 \mu^0)$ terms of \eqref{A-Euler-1} we obtain
\begin{equation}\label{A-equation-00}
\begin{cases}
-c u_{00,\xi} + 2\Omega W_{00} = - p_{00,\xi} &\text{in}\quad 0 < z < 1,\\
2\Omega u_{00} = p_{00,z} &\text{in}\quad 0 < z < 1,\\
u_{00,\xi} + W_{00,z} = 0 &\text{in}\quad 0 < z < 1,\\
u_{00,z} = 0 &\text{in}\quad 0 < z < 1,\\
p_{00} = \eta_{00} & \text{on} \quad z = 1, \\
W_{00} = - c \eta_{00,\xi} & \text{on} \quad z = 1,\\
W_{00} = 0 & \text{on} \quad z = 0.
\end{cases}
\end{equation}
To solve the system \eqref{A-equation-00}, we first obtain from the fourth equation in \eqref{A-equation-00} that $u_{00}$ is independent of $z$, that is,
$u_{00} = u_{00}(\tau, \xi)$.

 Thanks to the third equation in \eqref{A-equation-00} and the boundary condition of $W$ on $z=0$, we get
\begin{equation} \label{A-w00-1}
W_{00} =W_{00}|_{z = 0} + \int_0^z W_{00,z'} dz' = -\int_0^z u_{00,\xi}\, dz'= - z u_{00,\xi},
\end{equation}
which along with the boundary condition of $W$ on $z=1$ implies
\begin{equation}\label{A-u-00-1}
u_{00,\xi}(\tau, \xi) = c\eta_{00,\xi}(\tau, \xi).
\end{equation}
Thereore, we have
\begin{equation}\label{A-w-00}
u_{00}(\tau, \xi) = c\eta_{00}(\tau, \xi), \quad W_{00} = -cz\eta_{00,\xi},
\end{equation}
here use has been made of the far field conditions $u_{00}, \, \eta_{00} \rightarrow 0$ as $|\xi| \rightarrow \infty$.

On the other hand, from the second equation in \eqref{A-equation-00}, there appears that
\begin{equation}\label{A-p-00-1}
p_{00}= p_{00}|_{z = 1} + \int_1^z p_{00,z'} \,dz'=\eta_{00}+2\Omega \int_1^z u_{00} \,dz'=\eta_{00}+2\Omega (z-1) u_{00},
\end{equation}
which along with \eqref{A-u-00-1} implies
\begin{equation}\label{A-p-00-2}
p_{00, \xi}=\big(\frac{1}{c}+2\Omega (z-1)\big) u_{00, \xi},
\end{equation}
Combining \eqref{A-p-00-2} with \eqref{A-w00-1} and the first equation in \eqref{A-equation-00} gives rise to
\begin{equation*}
(c^2 + 2\Omega c - 1) u_{00, \xi} = 0,
\end{equation*}
which follows that
\begin{equation}\label{A-c-1-2}
c^2 + 2\Omega c - 1= 0,
\end{equation}
if we assume that $u_{00}$ is an non-trivial velocity. Therefore, when consider the waves move towards to the right side, we may obtain
\begin{equation}\label{A-c-1-3}
c = \sqrt{1 + \Omega^2} - \Omega.
\end{equation}


Vanishing the order $O(\varepsilon^1 \mu^0)$ terms of \eqref{A-Euler-1}, we obtain from the second equation in \eqref{A-equation-10} and the Taylor expansion
\begin{equation}\label{A-taylor-1}
f(z)=f(1)+\sum_{n=1}^{\infty} \frac{(z-1)^n}{n!}f^{(n)}(1)
\end{equation}
that
\begin{equation}\label{A-equation-10}
\begin{cases}
 - c u_{10,\xi} + u_{00,\tau} + u_{00} u_{00,\xi} + 2\Omega W_{10} = - p_{10,\xi} &\text{in}\quad 0 < z < 1, \\
2\Omega u_{10} = p_{10,z}  &\text{in}\quad 0 < z < 1, \\
u_{10,\xi} + W_{10,z} = 0 &\text{in}\quad 0 < z < 1, \\
u_{10,z} = 0 &\text{in}\quad 0 < z < 1, \\
p_{10} + p_{00, z}\eta_{00} = \eta_{10}&\text{on}\quad z = 1,\\
W_{10}+\eta_{00} W_{00, z}= - c \eta_{10,\xi} + \eta_{00,\tau} + u_{00}\eta_{00,\xi} &\text{on}\quad z = 1,\\
W_{10} = 0&\text{on}\quad z = 0.
\end{cases}
\end{equation}
From the fourth equation in \eqref{A-equation-10}, we know that $u_{10}$ is independent to $z$, that is, $u_{10} = u_{10}(\tau, \xi)$. Thanks to the third equation in \eqref{A-equation-10} and the boundary conditions of $W$ on $z=0$ and $z=1$, we get
\begin{equation}\label{A-w-10-1}
W_{10} = W_{10}|_{z = 0} + \int_0^z W_{10,z'} dz'= - z u_{10,\xi}
\end{equation}
and
\begin{equation*}
\begin{split}
W_{10}|_{z = 1} =- c \eta_{10,\xi} + \eta_{00,\tau} + (u_{00}\eta_{00})_{\xi} . \end{split}
\end{equation*}
Hence, we obtain from the third equation in \eqref{equation-00} and \eqref{w-00} that
\begin{equation}\label{A-u-10-1}
u_{10,\xi} = c\eta_{10,\xi} - \eta_{00,\tau} - (u_{00}\eta_{00})_{\xi},
\end{equation}
and then
\begin{equation*}
W_{10} = z(\eta_{00,\tau}+2c\eta_{00}\eta_{00,\xi}-c\eta_{10,\xi} ).
\end{equation*}
On the other hand, thanks to the second equation in \eqref{A-equation-10} and \eqref{A-w-00}, we deduce that
\begin{equation*}
\begin{split}
p_{10} &= p_{10}|_{z = 1} + \int_1^z p_{10,z'} dz' = \eta_{10}-2\Omega u_{00}\eta_{00} +2\Omega (z-1)u_{10},\end{split}
\end{equation*}
and then
\begin{equation}\label{A-p10-1a}
\begin{split}
p_{10, \xi} &= \eta_{10, \xi}-2\Omega (u_{00}\eta_{00})_{\xi} +2\Omega (z-1)u_{10, \xi}.\end{split}
\end{equation}
Taking account of  the first equation in \eqref{A-equation-10} and \eqref{A-w-00}, it must be
\begin{equation*}
- p_{10,\xi}=- c u_{10,\xi} + c \eta_{00,\tau} + c^2 \eta_{00} \eta_{00,\xi} - 2\Omega z u_{10, \xi},
\end{equation*}
which along with \eqref{A-p10-1a} and \eqref{A-u-10-1} implies
\begin{equation*}
\begin{split}
0=&- (c+2\Omega) u_{10,\xi} +\eta_{10, \xi}+ c \eta_{00,\tau} + c^2 \eta_{00} \eta_{00,\xi} -2\Omega (u_{00}\eta_{00})_{\xi}\\
=&c(u_{00}\eta_{00})_{\xi} -(c^2 + 2\Omega c - 1)\eta_{10, \xi}+ 2(c+\Omega) \eta_{00,\tau} + c^2 \eta_{00} \eta_{00,\xi}.
\end{split}\end{equation*}
Hence, it follows from \eqref{A-w-00} and \eqref{A-c-1-2} that
\begin{equation} \label{A-eta-00-eqn}
2(\Omega + c) \eta_{00,\tau} + 3c^2 \eta_{00}\eta_{00,\xi} = 0.
\end{equation}
Defining
\begin{equation}\label{A-c-0-0}
c_1 \overset{\text{def}}{=} -\frac{3c^2}{4(\Omega + c)}=-\frac{3 c^3}{2 (c^2 + 1)},
\end{equation}
we may rewrite \eqref{A-eta-00-eqn} as
\begin{equation}\label{A-eta-00-tau}
\eta_{00,\tau} = c_1 (\eta_{00}^2)_{\xi},
\end{equation}
which, together with \eqref{A-u-10-1}, implies
\begin{equation}\label{A-u-10-xi}
u_{10,\xi} = \big(c \eta_{10}- (c + c_1) \eta_{00}^2\big)_\xi.
\end{equation}
Therefore, we get from the far field conditions $u_{10}, \, \eta_{00}, \eta_{10} \rightarrow 0$ as $|\xi| \rightarrow \infty$ that
\begin{equation}\label{A-u-10-3}
u_{10} =  c \eta_{10}- (c + c_1) \eta_{00}^2,
\end{equation}
which follows from \eqref{A-eta-00-tau} that
\begin{equation}\label{A-u-10-tau}
u_{10, \tau} =  c \eta_{10, \tau}- 4(c + c_1)c_1 \eta_{00}^2 \eta_{00, \xi}.
\end{equation}
Similarly, vanishing the order $O(\varepsilon^0 \mu^1)$ terms of \eqref{A-Euler-1}, we obtain from the second equation in \eqref{A-equation-00} and the Taylor expansion \eqref{taylor-1} that
\begin{equation*}
\begin{cases}
- c u_{01,\xi} + 2\Omega W_{01} = - p_{01,\xi} \quad &\text{in}\quad 0 < z < 1,\\
2\Omega u_{01} = p_{01,z} \quad &\text{in}\quad 0 < z < 1,\\
u_{01,\xi} + W_{01,z} = 0 \quad &\text{in}\quad 0 < z < 1,\\
u_{01,z} = 0 \quad & \text{in} \quad 0 < z < 1,\\
p_{01} = \eta_{01} \quad & \text{on} \quad z = 1,\\
W_{01} = - c \eta_{01,\xi} \quad & \text{on} \quad z = 1,\\
W_{01} = 0 \quad & \text{on} \quad z = 0.
\end{cases}
\end{equation*}
From this, we may readily get from the above argument that
\begin{equation}\label{A-uwp-01-1}
u_{01} = c\eta_{01}=c\eta_{01}(\tau, \xi), \, W_{01} = -cz\eta_{01,\xi},\, p_{01} = [2\Omega c(z-1)+1]\eta_{01}.
\end{equation}

For the order $O(\varepsilon^2 \mu^0)$ terms of \eqref{A-Euler-1}, we obtain from the Taylor expansion \eqref{A-taylor-1} that
\begin{equation}\label{A-equation-20}
\begin{cases}
 -c u_{20,\xi} + u_{10,\tau} + (u_{00}u_{10})_{\xi}  + 2\Omega W_{20} = -p_{20,\xi} \quad &\text{in}\quad 0 < z < 1, \\
 -2 \Omega u_{20}=-p_{20,z}  \quad&\text{in}\quad 0 < z < 1,\\
u_{20,\xi} + W_{20,z} = 0 \quad&\text{in}\quad 0 < z < 1, \\
u_{20,z} = 0 \quad &\text{in}\quad 0 < z < 1, \\
p_{20} + \eta_{00}p_{10,z} + \eta_{10}p_{00,z}= \eta_{20}\quad&\text{on}\quad z = 1,\\
W_{20} + \eta_{00}W_{10,z} + \eta_{10}W_{00,z}  \\
\quad\quad\quad= - c \eta_{20,\xi} + \eta_{10,\tau} + u_{00}\eta_{10,\xi} + u_{10} \eta_{00,\xi} \quad &\text{on}\quad z = 1,\\
W_{20} = 0\quad&\text{on}\quad z = 0.
\end{cases}
\end{equation}
From the fourth equation in \eqref{A-equation-20}, we know that $u_{20}$ is independent of $z$, that is,
$u_{20}=u_{20}(\tau, \xi)$,
which along with the third equation in \eqref{A-equation-20} and the boundary condition of $W_{20}$ at $z=0$ implies that
\begin{equation}\label{A-w-20-1}
W_{20}=-z u_{20, \xi}.
\end{equation}
Combining \eqref{A-w-20-1} with  the boundary condition of $W_{20}$ at $z=1$, we get from the equations of $W_{00, z}$ and $W_{10, z}$ that
\begin{equation*}
u_{20, \xi}= c \eta_{20,\xi} - \eta_{10,\tau} - (u_{00}\eta_{10} + u_{10} \eta_{00})_{\xi},
\end{equation*}
that is,
\begin{equation}\label{A-w-20-3}
u_{20, \xi}= c \eta_{20,\xi} - \eta_{10,\tau} - 2c(\eta_{00}\eta_{10})_{\xi} +(c+c_1)(\eta_{00}^3)_{\xi}.
\end{equation}
While from the second equation in \eqref{A-equation-20} and the boundary condition of $p_{20}$ at $z=1$, we get
\begin{equation*}
\begin{split}
p_{20}=p_{20}|_{z=1}+\int_1^z p_{20, z'}\, dz'&= \eta_{20}-(\eta_{00}p_{10,z} + \eta_{10}p_{00,z} )+2\Omega \int_1^z u_{20}\, dz'\\
&= \eta_{20}-2\Omega (\eta_{00}u_{10} + \eta_{10}u_{00} )+2\Omega (z-1) u_{20},
\end{split}\end{equation*}
which leads to
\begin{equation}\label{A-p-20-2}
\begin{split}
p_{20, \xi}&= \eta_{20, \xi}-2\Omega (\eta_{00}u_{10} + \eta_{10}u_{00} )_{\xi}+2\Omega (z-1) u_{20, \xi}.
\end{split}\end{equation}
On the other hand, due to the first equation in \eqref{A-equation-20}, we deduce from \eqref{A-w-20-1} and \eqref{A-w-20-3} that
\begin{equation}\label{A-p-20-3}
-p_{20,\xi}= -c u_{20,\xi} + u_{10,\tau} + (u_{00}u_{10})_{\xi}  - 2\Omega z u_{20, \xi}.
\end{equation}
Combining \eqref{A-p-20-2} with \eqref{A-p-20-3}, we have
\begin{equation*}
\eta_{20, \xi}-2\Omega (\eta_{00}u_{10} + \eta_{10}u_{00} )_{\xi}-(c+2\Omega) u_{20,\xi} + u_{10,\tau} + (u_{00}u_{10})_{\xi} =0.
\end{equation*}
Thanks to \eqref{A-u-00-1}, \eqref{A-u-10-3}, and \eqref{A-u-10-tau}, we obtain
\begin{equation}\label{A-eta-10-eqn}
\begin{split}
2(c+\Omega) \eta_{10,\tau}+3c^2(\eta_{00}\eta_{10})_{\xi}  -(2c+\frac{4}{3}c_1)(c + c_1)(\eta_{00}^3)_{\xi}  =0,
\end{split}\end{equation}
which leads to
\begin{equation}\label{A-eta-10-tau}
 \eta_{10, \tau}= 2c_1(\eta_{00}\eta_{10})_{\xi}+\frac{2c_1+3c}{3(c+\Omega)}(c+c_1)(\eta_{00}^3)_{\xi}.
\end{equation}
Therefore, we have
\begin{equation*}
u_{20, \xi}= c \eta_{20,\xi} - 2(c+c_1)(\eta_{00}\eta_{10})_{\xi}-\frac{2c_1-3\Omega}{3(c+\Omega)}(c+c_1)(\eta_{00}^3)_{\xi},
\end{equation*}
which along with the far field conditions $\eta_{00},\, \eta_{10}, \,\eta_{20}\rightarrow 0$ as $|\xi| \rightarrow \infty$ gives
\begin{equation}\label{A-u-20}
u_{20}= c \eta_{20} - 2(c+c_1)\eta_{00}\eta_{10}-\frac{2c_1-3\Omega}{3(c+\Omega)}(c+c_1)\eta_{00}^3.
\end{equation}
Thanks to \eqref{A-eta-00-tau} and \eqref{A-eta-10-tau}, we deduce that
\begin{equation}\label{A-u-20-tau}
\begin{split}
u_{20, \tau}=c \eta_{20, \tau} - 4(c+c_1)c_1 (\eta_{00}^2\eta_{10})_{\xi}-\frac{8cc_1+4c_1^2+\frac{21}{4}c^2}{2(c+\Omega)}(c+c_1) (\eta_{00}^4)_{\xi}.
\end{split}\end{equation}

For the order $O(\varepsilon^1 \mu^1)$ terms of \eqref{A-Euler-1}, we obtain from the Taylor expansion \eqref{A-taylor-1} that
\begin{equation}\label{A-equation-11}
\begin{cases}
 - c u_{11,\xi} + u_{01,\tau} + u_{00}u_{01, \xi}+u_{10}u_{00, \xi}+ W_{00}u_{01, z}\\
 \qquad\qquad\qquad\qquad\qquad+W_{10}u_{00, z}+ 2\Omega W_{11} =  - p_{11,\xi} \quad &\text{in}\quad 0 < z < 1, \\
-cW_{00,\xi} - 2 \Omega u_{11} = - p_{11,z}  \quad&\text{in}\quad 0 < z < 1,\\
u_{11,\xi} + W_{11,z} = 0 \quad&\text{in}\quad 0 < z < 1, \\
u_{11,z} - W_{00,\xi}= 0 \quad &\text{in}\quad 0 < z < 1, \\
p_{11} = \eta_{11}-(\eta_{00}p_{01, z} +\eta_{01}p_{00, z})\quad&\text{on}\quad z = 1,\\
W_{11} +W_{00, z} \eta_{01}+W_{01, z} \eta_{00} \\
\qquad\qquad = - c \eta_{11,\xi}+\eta_{01,\tau} + u_{00}\eta_{01, \xi}+ u_{01}\eta_{00, \xi} \quad &\text{on}\quad z = 1,\\
W_{11} = 0\quad&\text{on}\quad z = 0.
\end{cases}
\end{equation}
Thanks to \eqref{A-w-00} and the fourth equation of \eqref{A-equation-11}, we have $u_{11, z} = -c z\eta_{00, \xi\xi}$,
and then
\begin{equation}\label{A-u-11-2}
u_{11} = -\frac{c}{2}z^2 \eta_{00, \xi\xi}+ \Phi_{11}(\tau, \xi)
\end{equation}
for some arbitrary smooth function $\Phi_{11}(\tau, \xi)$ independent of $z$.
While from the third equation in  \eqref{A-equation-11} with $W_{11}|_{z=0} = 0$, it follows that
\begin{equation}\label{A-w-11-1}
W_{11} = W_{11}|_{z=0} +\int_0^z W_{11, z'}\,dz'=\frac{c}{6}z^3 \eta_{00, \xi\xi\xi}- z\partial_{\xi}\Phi_{11}(\tau, \xi),
\end{equation}
which, along with the equations of $W_{00, z}$ and $W_{01, z}$, and the boundary condition of $W_{11}$ on $\{z=1\}$, implies
\begin{equation}\label{A-w-11-2}
\begin{split}
- \partial_{\xi}\Phi_{11}(\tau, \xi)=
-\frac{c}{6} \eta_{00, \xi\xi\xi}+ (u_{00}\eta_{01}+\eta_{00}u_{01})_{\xi}- c \eta_{11,\xi}+\eta_{01,\tau}.
\end{split}
\end{equation}
Hence, in view of \eqref{A-w-11-1}, \eqref{A-w-00}, \eqref{A-uwp-01-1}, and \eqref{A-u-00-1}, we obtain
\begin{equation}\label{A-w-11-3}
W_{11} =\frac{c}{6}z(z^2-1) \eta_{00, \xi\xi\xi}+z \bigg(- c \eta_{11,\xi}+\eta_{01,\tau} + (u_{00}\eta_{01}+\eta_{00}u_{01})_{\xi}\bigg).
\end{equation}
Due to \eqref{A-w-00}, \eqref{A-uwp-01-1}, \eqref{A-u-11-2}, and the boundary condition of $p_{11}$ in \eqref{A-equation-11},  we deduce from the second equation of \eqref{A-equation-11} that
\begin{equation*}
\begin{split}
&p_{11} =p_{11}|_{z = 1} + \int_1^z p_{11,z'}\, dz'=p_{11}|_{z = 1} + \int_1^z (cW_{00,\xi} +2 \Omega u_{11}) \, dz'\\
&=\eta_{11} - 2\Omega (u_{00}\eta_{01}+\eta_{00}u_{01}) - \bigg(\frac{c^2 }{2}(z^2-1)+\frac{\Omega c}{3}(z^3-1)\bigg)\eta_{00,\xi\xi}+2\Omega (z-1)\Phi_{11},
\end{split}
\end{equation*}
which implies
\begin{equation}\label{A-p-11-2}
\begin{split}
p_{11, \xi} =\eta_{11, \xi} - 2\Omega (u_{00}\eta_{01}+\eta_{00}u_{01})_{\xi} &- \bigg(\frac{c^2}{2}(z^2-1)+\frac{\Omega c}{3}(z^3-1)\bigg)\eta_{00,\xi\xi\xi}\\
&+2\Omega (z-1)\partial_{\xi}\Phi_{11}.
\end{split}
\end{equation}
Combining \eqref{A-p-11-2} and the first equation in \eqref{A-equation-11}, it follows  from \eqref{A-w-00}, \eqref{A-uwp-01-1}, and \eqref{A-u-11-2} that
\begin{equation}\label{A-p-11-3}
\begin{split}
&- c u_{11,\xi} + c\eta_{01,\tau} + c^2(\eta_{00}\eta_{01})_{\xi} + 2\Omega W_{11}+\eta_{11, \xi} - 4\Omega c (\eta_{00}\eta_{01})_{\xi}\\
& - \bigg(\frac{c^2}{2}(z^2-1)+\frac{\Omega c}{3}(z^3-1)\bigg)\eta_{00,\xi\xi\xi}+2\Omega (z-1)\partial_{\xi}\Phi_{11}=0.
\end{split}
\end{equation}
Substituting \eqref{A-u-11-2} and \eqref{A-w-11-2} into \eqref{A-p-11-3}, we obtain
\begin{equation}\label{A-eta-01-eqn}
\begin{split}
&2(\Omega+c) \eta_{01,\tau} + 3c^2(\eta_{00}\eta_{01})_{\xi} + \frac{c^2}{3}\eta_{00,\xi\xi\xi}=0,
\end{split}
\end{equation}
that is,
\begin{equation} \label{A-eta-01-tau}
\eta_{01,\tau} = 2 c_1 (\eta_{00}\eta_{01})_\xi + \frac{2 c_1}{9} \eta_{00,\xi\xi\xi},
\end{equation}
which, together with \eqref{A-w-11-2}, \eqref{A-w-11-3}, and \eqref{A-u-11-2}, leads to
\begin{equation*}
\begin{split}
- \partial_{\xi}\Phi_{11}(\tau, \xi)=(\frac{2 c_1}{9}-\frac{c}{6}) \eta_{00, \xi\xi\xi}+ 2(c +c_1)(\eta_{00}\eta_{01})_{\xi}- c \eta_{11,\xi},
\end{split}
\end{equation*}
and then
\begin{equation*}
W_{11} =\bigg(\frac{2 c_1}{9}+\frac{c}{6}(z^2-1)\bigg)\, z\,\eta_{00, \xi\xi\xi}+ 2(c +c_1)\,z\,(\eta_{00}\eta_{01})_{\xi}- c\,z\, \eta_{11,\xi}
\end{equation*}
and
\begin{equation}\label{A-u-11}
u_{11} = \left(\frac{c}{6} - \frac{2c_1}{9} -\frac{c}{2}z^2 \right) \eta_{00,\xi\xi} + c \eta_{11} - 2 (c+c_1) \eta_{00}\eta_{01},
\end{equation}
where use has been made by  the far field conditions $u_{11}, \, \eta_{00,\xi\xi},\, \eta_{00}, \,\eta_{01},\, \eta_{11}\rightarrow 0$ as $|\xi| \rightarrow \infty$.

Thanks to \eqref{A-eta-00-tau} and \eqref{A-eta-01-tau}, we obtain
\begin{equation}\label{A-u-11-tau}
\begin{split}
u_{11, \tau} =&c \eta_{11, \tau} +\left(\frac{cc_1 }{6} - \frac{2c_1^2}{9} -\frac{cc_1 }{2}z^2 \right)(\eta_{00}^2)_{\xi\xi\xi}  \\
&- 2 (c+c_1) \bigg(2 c_1 (\eta_{00}^2\eta_{01})_\xi + \frac{2 c_1}{9} \eta_{00}\eta_{00,\xi\xi\xi}\bigg).
\end{split}
\end{equation}


For the order $O(\varepsilon^3 \mu^0)$ terms of \eqref{A-Euler-1}, we obtain from the Taylor expansion \eqref{A-taylor-1} that
\begin{equation}\label{A-equation-30}
\begin{cases}
 -c u_{30,\xi} + u_{20,\tau} + (u_{00}u_{20}+\frac{1}{2}u_{10}^2)_{\xi}  + 2\Omega W_{30} = -p_{30,\xi} \quad &\text{in}\quad 0 < z < 1, \\
 -2 \Omega u_{30}=-p_{30,z}  \quad &\text{in}\quad 0 < z < 1,\\
u_{30,\xi} + W_{30,z} = 0 \quad &\text{in}\quad 0 < z < 1, \\
u_{30,z} = 0 \quad &\text{in}\quad 0 < z < 1, \\
p_{30} + \eta_{00}p_{20,z} + \eta_{10}p_{10,z} + \eta_{20}p_{00,z}= \eta_{30}\quad &\text{on}\quad z = 1,\\
W_{30} + \eta_{00}W_{20,z} + \eta_{10}W_{10,z} + \eta_{20}W_{00,z}  \\
\quad\quad\quad= - c \eta_{30,\xi} + \eta_{20,\tau} + u_{00}\eta_{20,\xi} + u_{10} \eta_{10,\xi} + u_{20} \eta_{00,\xi} \quad &\text{on}\quad z = 1,\\
W_{30} = 0\quad &\text{on}\quad z = 0.
\end{cases}
\end{equation}
From the fourth equation in \eqref{A-equation-30}, we know that $u_{30}$ is independent of $z$, that is,
$u_{30}=u_{30}(\tau, \xi)$,
which along with the third equation in \eqref{A-equation-30} and the boundary condition of $W_{30}$ at $z=0$ implies that
$W_{30}=-z u_{30, \xi}$.
Combining \eqref{w-20-1} with  the boundary condition of $W_{20}$ at $z=1$, we have
\begin{equation}\label{A-u-30-xi-1}
u_{30, \xi}= c \eta_{30,\xi} - \eta_{20,\tau} - (u_{00}\eta_{20} + u_{10} \eta_{10}+ u_{20} \eta_{00})_{\xi}.
\end{equation}
While from the second equation in \eqref{A-equation-30} and the boundary condition of $p_{30}$ at $z=1$, we get
\begin{equation*}
\begin{split}
&p_{30}=p_{30}|_{z=1}+\int_1^z p_{30, z'}\, dz'\\
&= \eta_{30}-(\eta_{00}p_{20,z} + \eta_{10}p_{10,z} + \eta_{20}p_{00,z})+2\Omega \int_1^z u_{30}\, dz'\\
&= \eta_{30}-2\Omega (u_{00}\eta_{20} + u_{10} \eta_{10}+ u_{20} \eta_{00})+2\Omega (z-1) u_{30},
\end{split}\end{equation*}
which leads to
\begin{equation}\label{A-p-30-xi-1}
\begin{split}
p_{30, \xi}&= \eta_{30, \xi}-2\Omega (u_{00}\eta_{20} + u_{10} \eta_{10}+ u_{20} \eta_{00})_{\xi}+2\Omega (z-1) u_{30, \xi}.
\end{split}\end{equation}
On the other hand, from the first equation in \eqref{A-equation-30}, we have
\begin{equation}\label{A-p-30-xi-2}
\begin{split}
 -p_{30,\xi}=-c u_{30,\xi} + u_{20,\tau} + (u_{00}u_{20}+\frac{1}{2}u_{10}^2)_{\xi}  - 2\Omega z u_{30, \xi}.
\end{split}\end{equation}
Combining \eqref{A-p-30-xi-1} with \eqref{A-p-30-xi-2}, we get
\begin{equation}\label{A-p-30-xi-3}
\begin{split}
0= \eta_{30, \xi}-2\Omega (u_{00}\eta_{20} + u_{10} \eta_{10}+ u_{20} \eta_{00})_{\xi}-(c+2\Omega) u_{30, \xi}+ u_{20,\tau} + (u_{00}u_{20}+\frac{1}{2}u_{10}^2)_{\xi} .
\end{split}\end{equation}
Substituting \eqref{A-u-30-xi-1} and \eqref{A-u-20-tau} into \eqref{A-p-30-xi-3}, we obtain
\begin{equation}\label{A-eta-20-eqn}
\begin{split}
2(c+\Omega)\eta_{20,\tau} +3c^2(\eta_{00}\eta_{20})_{\xi} &+ \frac{3c^2}{2}(\eta_{10}^2)_{\xi}-2(2c_1+3c)(c+c_1)(\eta_{00}^2\eta_{10})_{\xi}\\
&-\frac{(64cc_1+24c_1^2+45c^2-15)}{12(c+\Omega)}(c+c_1)(\eta_{00}^4)_{\xi}=0,
\end{split}\end{equation}
that is,
\begin{equation}\label{A-eta-20-tau}
\begin{split}
\eta_{20,\tau}= 2c_1(\eta_{00}\eta_{20})_{\xi} +c_1(\eta_{10}^2)_{\xi}&+\frac{2c_1+3c}{\Omega+c}(c+c_1)(\eta_{00}^2\eta_{10})_{\xi}\\
&+\frac{(64cc_1+24c_1^2+45c^2-15)}{24(c+\Omega)^2}(c+c_1)(\eta_{00}^4)_{\xi}.
\end{split}\end{equation}
Thanks to \eqref{A-u-30-xi-1} again, we have
\begin{equation*}
\begin{split}
u_{30, \xi}=& c \eta_{30,\xi} -  2(c+c_1)(\eta_{00}\eta_{20})_{\xi} -(c+c_1)(\eta_{10}^2)_{\xi}-\frac{2c_1-3\Omega}{\Omega+c}(c+c_1)(\eta_{00}^2\eta_{10})_{\xi}\\
&-\frac{(64cc_1+24c_1^2+45c^2+24\Omega^2-3)}{24(c+\Omega)^2}(c+c_1)(\eta_{00}^4)_{\xi},
\end{split}\end{equation*}
which implies
\begin{equation}\label{A-u-30}
\begin{split}
u_{30}=& c \eta_{30} -  2(c+c_1)(\eta_{00}\eta_{20}) -(c+c_1)(\eta_{10}^2)-\frac{2c_1-3\Omega}{\Omega+c}(c+c_1)(\eta_{00}^2\eta_{10})\\
&-\frac{(64cc_1+24c_1^2+45c^2+24\Omega^2-3)}{24(c+\Omega)^2}(c+c_1)(\eta_{00}^4).
\end{split}\end{equation}
Therefore, due to \eqref{A-eta-00-tau}, \eqref{A-eta-10-tau}, and \eqref{A-eta-20-tau}, we have
\begin{equation}\label{A-u-30-tau}
\begin{split}
u_{30, \tau}=& c \eta_{30, \tau} - \frac{2(3c^2+5cc_1+4c_1^2-3\Omega c_1)}{\Omega+c}(c+c_1)(\eta_{00}^3\eta_{10})_{\xi} \\
&-4c_1(c+c_1)(\eta_{00}\eta_{10}^2)_{\xi}-4c_1(c+c_1)(\eta_{00}^2\eta_{20})_{\xi}
-B_1\eta_{00}^4\eta_{00, \xi}
\end{split}\end{equation}
with
\begin{equation*}\begin{split}
B_1\overset{\text{def}}{=} &\frac{(c+c_1)^2(82cc_1+36c_1^2+45c^2-18\Omega c_1-27\Omega c-15)}{3(\Omega+c)^2}\\
&+\frac{c_1(c+c_1)(64cc_1+24c_1^2+45c^2+24\Omega^2-3)}{3(\Omega+c)^2}.
\end{split}\end{equation*}
For the terms of \eqref{A-Euler-1} at order $O(\varepsilon^4 \mu^0)$, it is inferred  from the Taylor expansion \eqref{A-taylor-1} that
\begin{equation}\label{A-equation-40}
\begin{cases}
 -c u_{40,\xi} + u_{30,\tau} + (u_{00}u_{30}+u_{10}u_{20})_{\xi}  + 2\Omega W_{40} = -p_{40,\xi} \quad &\text{in}\quad 0 < z < 1, \\
 -2 \Omega u_{40}=-p_{40,z}  \quad&\text{in}\quad 0 < z < 1,\\
u_{40,\xi} + W_{40,z} = 0 \quad&\text{in}\quad 0 < z < 1, \\
u_{40,z} = 0 \quad &\text{in}\quad 0 < z < 1, \\
p_{40} + \eta_{00}p_{30,z} + \eta_{10}p_{20,z} + \eta_{20}p_{10,z}+ \eta_{30}p_{00,z}= \eta_{40}\quad&\text{on}\quad z = 1,\\
W_{40} + \eta_{00}W_{30,z} + \eta_{10}W_{20,z} + \eta_{20}W_{10,z}+ \eta_{30}W_{00,z}   \\
\quad= - c \eta_{40,\xi} + \eta_{30,\tau} + u_{00}\eta_{30,\xi} + u_{10} \eta_{20,\xi} + u_{20} \eta_{10,\xi}+ u_{30} \eta_{00,\xi} \quad &\text{on}\quad z = 1,\\
W_{40} = 0\quad&\text{on}\quad z = 0.
\end{cases}
\end{equation}

From the fourth equation in \eqref{A-equation-30}, we know that $u_{40}$ is independent of $z$, that is,
$u_{40}=u_{40}(\tau, \xi)$,
which along with the third equation in \eqref{A-equation-40} and the boundary condition of $W_{40}$ at $z=0$ implies that
\begin{equation}\label{A-w-40-1}
W_{40}=-z u_{40, \xi}.
\end{equation}
Combining \eqref{A-w-40-1} with  the boundary condition of $W_{40}$ at $z=1$, we have
\begin{equation}\label{A-u-40-xi-1}
u_{40, \xi}= c \eta_{40,\xi} - \eta_{30,\tau} - (u_{00}\eta_{30} + u_{10} \eta_{20}+ u_{20} \eta_{10}+ u_{30} \eta_{00})_{\xi},
\end{equation}
From the second equation in \eqref{A-equation-40} and the boundary condition of $p_{30}$ at $z=1$, we get
\begin{equation*}
\begin{split}
&p_{40}=p_{40}|_{z=1}+\int_1^z p_{40, z'}\, dz'\\
&= \eta_{40}-(\eta_{00}p_{30,z} + \eta_{10}p_{20,z} + \eta_{20}p_{10,z}+ \eta_{30}p_{00,z})+2\Omega \int_1^z u_{40}\, dz'\\
&= \eta_{40}-2\Omega (u_{00}\eta_{30} + u_{10} \eta_{20}+ u_{20} \eta_{10}+ u_{30} \eta_{00})+2\Omega (z-1) u_{40},
\end{split}\end{equation*}
which implies
\begin{equation}\label{A-p-40-xi-1}
\begin{split}
p_{40, \xi}&= -\eta_{40, \xi}-2\Omega (u_{00}\eta_{30} + u_{10} \eta_{20}+ u_{20} \eta_{10}+ u_{30} \eta_{00})_{\xi}+2\Omega (z-1) u_{40, \xi}.
\end{split}\end{equation}
On the other hand, from the first equation in \eqref{A-equation-40}, we have
\begin{equation*}
\begin{split}
 -p_{40,\xi} =-c u_{40,\xi} + u_{30,\tau} + (u_{00}u_{30}+u_{10}u_{20})_{\xi}  + 2\Omega W_{40},
\end{split}\end{equation*}
which along with \eqref{A-w-40-1} and \eqref{A-p-40-xi-1} gives rise to
\begin{equation}\label{A-p-40-xi-3}
\begin{split}
0=&-(c+2\Omega) u_{40,\xi} + u_{30,\tau} + (u_{00}u_{30}+u_{10}u_{20})_{\xi} \\
&+\eta_{40, \xi}-2\Omega (u_{00}\eta_{30} + u_{10} \eta_{20}+ u_{20} \eta_{10}+ u_{30} \eta_{00})_{\xi}
\end{split}\end{equation}
Substituting \eqref{A-u-40-xi-1} and \eqref{A-u-30-tau} into \eqref{A-p-40-xi-3}, we obtain
\begin{equation}\label{A-eta-30-eqn}
\begin{split}
&2(c+\Omega)\eta_{30,\tau} +3c^2(\eta_{00}\eta_{30}+\eta_{10}\eta_{20})_{\xi} -2(3c+2c_1)(c+c_1)(\eta_{00}^2\eta_{20}+\eta_{00}\eta_{10}^2)_{\xi}\\
&\quad-\frac{(64cc_1+24c_1^2+45c^2-15)}{3(c+\Omega)}(c+c_1)(\eta_{00}^3\eta_{10})_{\xi}-B_2(\eta_{00}^5)_{\xi}=0
\end{split}\end{equation}
with
\begin{equation*}\begin{split}
B_2&\overset{\text{def}}{=} \frac{1}{5}B_1-\frac{(c+c_1)^2(2c_1-3\Omega)}{3(\Omega+c)}+\frac{2c(c+c_1)(64cc_1+24c_1^2+45c^2+24\Omega^2-3)}{12(\Omega+c)^2}\\
&=\frac{c^2(2-c^2)(3c^{10}+228c^8-540c^6-180c^4-13c^2+42)}{60(c^2+1)^6}.
\end{split}\end{equation*}


For the terms in \eqref{A-Euler-1} at  order $O(\varepsilon^2 \mu^1)$, we have
\begin{equation}\label{A-equation-21}
\begin{cases}
-c u_{21,\xi} + u_{11,\tau} + (u_{00}u_{11}+u_{10}u_{01})_\xi+ W_{00}u_{11,z}+2\Omega W_{21} = - p_{21,\xi}\quad &\text{in}\quad 0 < z < 1,\\
-cW_{10,\xi} + W_{00,\tau} + u_{00}W_{00,\xi} + W_{00}W_{00,z} - 2\Omega u_{21} = - p_{21,z} \quad &\text{in}\quad 0 < z < 1,\\
u_{21,\xi}+W_{21,z} = 0\quad &\text{in}\quad 0 < z < 1,\\
u_{21,z}-W_{10,\xi} = 0\quad &\text{in}\quad 0 < z < 1,\\
p_{21} + \eta_{10}p_{01,z} + \eta_{01}p_{10,z}+\eta_{00}p_{11,z}+\eta_{11}p_{00,z}= \eta_{21}\quad &\text{on}\quad z=1,\\
W_{21} + \eta_{10}W_{01,z}+\eta_{01}W_{10,z}+\eta_{00}W_{11,z}+\eta_{11}W_{00,z} \\
= - c \eta_{21,\xi} + \eta_{11,\tau}+u_{00}\eta_{11,\xi}+u_{11}\eta_{00,\xi} +u_{10}\eta_{01,\xi}+u_{01}\eta_{10,\xi} \quad &\text{on}\quad z=1,\\
W_{21} = 0\quad &\text{on}\quad z=0.
\end{cases}
\end{equation}
We now first derive from \eqref{A-w-10-1}, \eqref{A-u-10-xi}, and the fourth equation in \eqref{A-equation-21} that
\begin{equation*}
u_{21,z}=W_{10,\xi} = z\bigg(2(c+c_1)(\eta_{00,\xi}^2+\eta_{00}\eta_{00,\xi\xi})-c\eta_{10,\xi\xi}\bigg),
\end{equation*}
which gives
\begin{equation*}
u_{21}= \frac{z^2}{2}\bigg(2(c+c_1)(\eta_{00,\xi}^2+\eta_{00}\eta_{00,\xi\xi})-c\eta_{10,\xi\xi}\bigg)+ \Phi_{21}(\tau, \xi)=\frac{z^2}{2}H_1+ \Phi_{21}(\tau, \xi)
\end{equation*}
for some smooth function $\Phi_{21}(\tau, \xi)$ independent of $z$, where we denote
\begin{equation*}
H_1 \overset{\text{def}}{=}2(c+c_1)(\eta_{00,\xi}^2+\eta_{00}\eta_{00,\xi\xi})-c\eta_{10,\xi\xi}.
\end{equation*}
 Hence, we have
\begin{equation*}
\begin{split}
u_{21, \xi}= \frac{z^2}{2}H_{1, \xi}+ \partial_{\xi}\Phi_{21}(\tau, \xi).
\end{split}
\end{equation*}
On the other hand, thanks to the third equation in \eqref{A-equation-21} and the boundary condition of $W_{21}$ on $\{z=0\}$, we get
\begin{equation*}
\begin{split}
W_{21}&=W_{21}|_{z=0}+\int_0^z W_{21, z'}\,dz'=-\int_0^z u_{21, \xi}\,dz'= -\frac{z^3}{6}H_{1, \xi}-z \partial_{\xi}\Phi_{21}(\tau, \xi),
\end{split}
\end{equation*}
which along with the boundary condition of $W_{21}$ on $\{z=1\}$ leads to
\begin{equation*}
\begin{split}
-\frac{1}{6}H_{1, \xi}- \partial_{\xi}\Phi_{21}(\tau, \xi)&= - c \eta_{21,\xi} + \eta_{11,\tau}+(u_{00}\eta_{11}+u_{11}\eta_{00} +u_{10}\eta_{01}+u_{01}\eta_{10})_{\xi}|_{z=1}\\
&= - c \eta_{21,\xi} + \eta_{11,\tau}+H_{2, \xi}|_{z=1},
 \end{split}
\end{equation*}
where we denote
\begin{equation*}
H_2 \overset{\text{def}}{=} u_{00}\eta_{11}+u_{11}\eta_{00} +u_{10}\eta_{01}+u_{01}\eta_{10}.
\end{equation*}
It then follows that
\begin{equation}\label{A-phi-21-1}
\begin{split}
\partial_{\xi}\Phi_{21}(\tau, \xi)= c \eta_{21,\xi} - \eta_{11,\tau}-\frac{1}{6}H_{1, \xi}-H_{2, \xi}|_{z=1},  \end{split}
\end{equation}
which implies
\begin{equation}\label{A-u-21-xi-2}
\begin{split}
u_{21, \xi}=  c \eta_{21,\xi} - \eta_{11,\tau}+(\frac{z^2}{2}-\frac{1}{6})H_{1, \xi}-H_{2, \xi}|_{z=1}
\end{split}
\end{equation}
and
\begin{equation}\label{A-w-21-1}
\begin{split}
W_{21}= \frac{z(1-z^2)}{6}H_{1, \xi}-c z\eta_{21,\xi} +z \eta_{11,\tau}+z (H_{2, \xi}|_{z=1}).
\end{split}
\end{equation}
Substituting the expressions of $W_{00,\tau}$, $u_{00}$, $W_{00,\xi}$, $W_{00}$, $W_{00,z}$, and $W_{10,\xi}$ into the second equation in \eqref{A-equation-21}, we obtain
\begin{equation}\label{A-p-21-1}
p_{21,z} = 2\Omega u_{21}-c^2 z \eta_{10,\xi\xi}+c(c  +4  c_1 )z\eta_{00,\xi}^2+c(3c+4c_1 )z\eta_{00}\eta_{00,\xi\xi}.
\end{equation}
While from the boundary condition of $p_{21}$ on $z=1$, we have
\begin{equation*}
p_{21}|_{z=1} = \eta_{21}+c^2\eta_{00}\eta_{00, \xi\xi}-2\Omega H_{2}|_{z=1},
\end{equation*}
which along with \eqref{A-p-21-1} leads to
\begin{equation}\label{A-p-21-3}
\begin{split}
&p_{21}=p_{21}|_{z=1}+\int_1^z p_{21,z'}\,dz' \\
&= \eta_{21}-2\Omega H_{2}|_{z=1}+ 2\Omega \int_1^z u_{21}\, dz'-\frac{c^2}{2}(z^2-1) \eta_{10,\xi\xi}\\
&\quad+\frac{c(c +4c_1 )}{2} (z^2-1)\eta_{00,\xi}^2+\bigg(c^2+\frac{c(3c+4c_1)}{2} (z^2-1)\bigg)\eta_{00}\eta_{00,\xi\xi},
\end{split}
\end{equation}
and then
\begin{equation}\label{A-p-21-3a}
\begin{split}
&p_{21, \xi}= \eta_{21, \xi}-2\Omega H_{2, \xi}|_{z=1}+ 2\Omega \int_1^z u_{21, \xi}\, dz'-\frac{c^2 }{2}(z^2-1) \eta_{10,\xi\xi\xi}\\
&\quad+\frac{c(c  +4  c_1 )}{2}(z^2-1) (\eta_{00,\xi}^2)_{\xi}+\bigg(c^2+\frac{c(3c+4c_1 )}{2} (z^2-1)\bigg)(\eta_{00}\eta_{00,\xi\xi})_{\xi}\\
&= -2\Omega z H_{2, \xi}|_{z=1}+ 2\Omega (z-1) \bigg(c \eta_{21,\xi} - \eta_{11,\tau}\bigg)+\frac{z(z^2-1)}{6}H_{1, \xi}-\frac{c^2 }{2}(z^2-1) \eta_{10,\xi\xi\xi}\\
&\quad+\eta_{21, \xi}+\frac{c(c  +4  c_1 )}{2} (z^2-1)(\eta_{00,\xi}^2)_{\xi}+\bigg(c^2+\frac{c(3c+4c_1 )}{2}(z^2-1) \bigg)(\eta_{00}\eta_{00,\xi\xi})_{\xi}.
\end{split}
\end{equation}
Thanks to the first equation in \eqref{A-equation-21}, \eqref{A-w-21-1}, and \eqref{A-w-00}, we get
\begin{equation}\label{A-p-21-4}
\begin{split}
- p_{21,\xi}=&-c u_{21,\xi} + u_{11,\tau} + (u_{00}u_{11}+u_{10}u_{01})_\xi+ c^2z^2\eta_{00, \xi}\eta_{00,\xi\xi}\\
&+\frac{\Omega }{3}z(1-z^2)H_{1, \xi}-2\Omega c z\eta_{21,\xi} +2\Omega z \eta_{11,\tau}+2\Omega z H_{2, \xi}|_{z=1}.
\end{split}
\end{equation}
Combining \eqref{A-p-21-4} with \eqref{A-p-21-3}, we get
\begin{equation}\label{A-p-21-4a}
\begin{split}
&0=-c u_{21,\xi} + u_{11,\tau} + (u_{00}u_{11}+u_{10}u_{01})_\xi+ \bigg(\frac{c^2}{2}z^2+\frac{c(c  +4  c_1 )}{2}(z^2-1)\bigg)(\eta_{00, \xi}^2)_{\xi}\\
&+\frac{\Omega }{3}z(1-z^2)H_{1, \xi}+(1-2\Omega c)\eta_{21, \xi}+ 2\Omega \eta_{11,\tau}+\frac{z(z^2-1)}{6}H_{1, \xi}-\frac{c^2 }{2}(z^2-1) \eta_{10,\xi\xi\xi}\\
&+\bigg(c^2+\frac{c(3c+4c_1 )}{2} (z^2-1)\bigg)(\eta_{00}\eta_{00,\xi\xi})_{\xi}.
\end{split}
\end{equation}
Notice that
\begin{equation*}
\begin{split}
&(u_{01}u_{10}+u_{00}u_{11})_\xi \\
& = c^2(\eta_{01}\eta_{10}+\eta_{00}\eta_{11})_\xi + \left(\frac{c^2}{6}-\frac{2cc_1}{9} - \frac{c^2 z^2}{2}\right)(\eta_{00}\eta_{00,\xi\xi})_\xi- 3 c(c+c_1)(\eta_{00}^2\eta_{01})_\xi
\end{split}
\end{equation*}
and
\begin{equation*}
\begin{split}
&H_{2, \xi}|_{z=1} =3c^2(\eta_{01}\eta_{10}+\eta_{00}\eta_{11})_\xi - \left(\frac{c^2}{3}+\frac{2cc_1}{9}\right)(\eta_{00}\eta_{00,\xi\xi})_\xi- 3 c(c+c_1)(\eta_{00}^2\eta_{01})_\xi.
\end{split}
\end{equation*}
We substitute \eqref{A-u-21-xi-2} and \eqref{A-u-11-tau} into \eqref{A-p-21-4a} to get
\begin{equation}\label{A-eta-11-eqn}
\begin{split}
&2(\Omega + c)\eta_{11,\tau} + 3c^2(\eta_{00}\eta_{11}+\eta_{10}\eta_{01})_\xi-2(c+c_1)(3c+2c_1)(\eta_{00}^2\eta_{01})_\xi+\frac{c^2}{3}\eta_{10,\xi\xi\xi}\\
&-\left(\frac{c^2}{6}+\frac{10c c_1}{9}+\frac{2 c_1^2}{9}\right)(\eta_{00,\xi}^2)_{\xi}-\left(\frac{c^2}{3}+\frac{20 c c_1}{9}+\frac{8 c_1^2}{9}\right)(\eta_{00}\eta_{00,\xi\xi})_{\xi}=0.
\end{split}
\end{equation}

\vskip 0.2cm

\noindent {\bf Acknowledgments.}    The work of Gui is supported in part  by the NSF-China under the grants 11571279, 11331005, and the Foundation FANEDD-201315.  The work of Liu is  supported in part by the Simons Foundation grant-499875.

\vskip 0.2cm



\end{document}